\newcommand{\K}{{\mathbf K}^\crit}
\newcommand{\sm}{\setminus}
\newcommand{\kahler}{K\"ahler }
\newcommand{\wh}{\widehat}
\newcommand{\PP}{{\mathbb P}}
\newcommand{\RR}{\mathbb{R}}
\newcommand{\CC}{{\mathbb C}}
\newcommand{\ZZ}{{\mathbb Z}}
\newcommand{\NN}{{\mathbb N}}
\newcommand{\R}{{\mathbb R}}
\newcommand{\C}{{\mathbb C}}
\newcommand{\Z}{{\mathbb Z}}
\newcommand{\N}{{\mathbb N}}
\newcommand{\dbar}{\bar\partial}
\newcommand{\ddbar}{\partial\dbar}
\newcommand{\D}{{\mathbf D}}
\renewcommand{\H}{{\mathbf H}}
\newcommand{\hcal}{\mathcal{H}}
\newcommand{\lcal}{\mathcal{L}}
\newcommand{\pcal}{\mathcal{P}}
\newcommand{\qcal}{\mathcal{Q}}
\newcommand{\al}{\alpha}
\newcommand{\la}{\lambda}
\def    \span   {{\operatorname{span}}}
\def    \open   {{\operatorname{o}}}
\def    \span   {{\operatorname{span}}}
\def    \Z  {{\mathbb Z}}
\def    \R  {{\mathbb R}}
\def    \C  {{\mathbb C}}
 \def    \Im     {{\operatorname{Im}}}
\newtheorem{maintheo}{{\sc Theorem}}
\newtheorem{theo}{{\sc Theorem}}[section]
\newtheorem{cor}[theo]{{\sc Corollary}}
\newtheorem{conj}[theo]{{\sc Conjecture}}
\newtheorem{remark}[theo]{{\sc Remark}}
\newtheorem{lem}[theo]{{\sc Lemma}}
\newtheorem{prop}[theo]{{\sc Proposition}}
\newtheorem{definition}[theo]{{\sc Definition}}
\newenvironment{defin-no-number}{\medskip\noindent{\it Definition:\/} }{\medskip}
\newtheorem{claim}[theo]{{\sc Claim}}
\def\h#1{\hbox{#1}}
\def\HungarianAccent#1{{\accent"7D #1}}
\def\o{\omega}
\def\Szego{Szeg\HungarianAccent{o} }
\def\K{K\"ahler }
\def\ra{\rightarrow}
\def\a{\alpha}
\def\th{\theta}
\def\vp{\varphi}
\def\isom{\cong}
\def\w{\wedge}
\def\i{\sqrt{-1}}
\def\text{\textstyle}
\def\equationspace{\medskip\noindent}
\def\ra{\rightarrow}
\def\isom{\cong}
\def\del{\partial}
\def\MAop{\hbox{\rm MA\hglue0.02cm}}
\def\dis{\displaystyle}
\def\calQ{\qcal}
\def\calP{\pcal}
\def\calL{\lcal}
\def\calH{\hcal} \def\H{\hcal}
\newcommand{\Hilb}{{{\operatorname{Hilb}}}}
\def\uscreg{{\operatorname{reg}}}
\def\cvx{{\operatorname{cvx}}}
\def\sing{{\operatorname{sing}}}
\def\max{{\operatorname{max}}}
\title[
The Cauchy problem for Monge-Amp\`ere, I
]
{The Cauchy problem for the homogeneous Monge-Amp\`ere equation, I. Toeplitz quantization
}
\author{Yanir A. Rubinstein }
\author{Steve Zelditch }
\address{Department of Mathematics, Stanford University, Stanford, CA 94305, USA}
\email{yanir@member.ams.org}
\address{Department of Mathematics, Northwestern  University,
Evanston, IL 60208, USA} \email{ zelditch@math.northwestern.edu}
\thanks{\hglue-10pt August 20, 2010. Revised October 26, 2010.}
\begin{document}

\maketitle

\maketitle

\begin{abstract}

The Cauchy problem for the homogeneous  (real and complex)
Monge-Amp\`ere equation (HRMA/HCMA) arises from the initial value
problem  
for geo-desics in the space of K\"ahler metrics. It
is an ill-posed problem.  We conjecture that, in its  lifespan,
the solution can be obtained by  Toeplitz quantizing the
Hamiltonian flow defined by  the Cauchy data, analytically
continuing the quantization, and then taking a kind of logarithmic
classical limit. In this article, we prove that in the case of
torus invariant metrics 
(where the
HCMA reduces to the HRMA) this ``quantum analytic continuation
potential" coincides with the well-known Legendre transform
potential, and hence solves the equation as long as it is
smooth. In the sequel \cite{RZ2} we prove that the Legendre
transform potential ceases to solve the HRMA after that
time.

\end{abstract}

\bigskip
\section{Introduction}

This article is the first in a series whose aim is to study
existence, uniqueness and regularity of solutions of the initial
value problem (IVP) for geodesics in the space of \kahler metrics
in a fixed class. It is a special case of the  Cauchy problem for
the HCMA (homogeneous complex Monge-Amp\`ere equation).
Unlike the much-studied Dirichlet problem %\cite{BT1,Ch,D2,CT},
little has been proven for the Cauchy problem for the
Monge-Amp\`ere equation, and there is currently no known method to
solve it for smooth Cauchy data. Indeed, it is an ill-posed
problem and one does not expect global in time solutions to exist
for `most' initial data. The goal is thus to determine which
initial data give rise to global solutions, especially those of
relevance in geometry (`geodesic rays') and to determine the
lifespan $T_\span$ of solutions for general initial data. In this
article, we propose a general solution to the  IVP  for the
geodesic equation on a polarized projective \kahler manifold,
valid for the lifespan of the solution, in terms of a Toeplitz
quantization and its analytic continuation. This conjectural
solution,  which we call the ``quantum analytic continuation
potential," is defined as the logarithmic limit of   a  canonical
sequence of subsolutions of the HCMA  obtained from the analytic
continuation in time of the Toeplitz quantization of the Cauchy
data.

Our first goal in this series is to show that the conjectured solution
is indeed a
solution to the IVP for geodesics, as long as one exists, when the
\kahler manifold $(M, \omega)$ has an  $(S^1)^n$ symmetry with $n
= \dim M$. In such cases (including toric \kahler manifolds and
Abelian varieties), the HCMA reduces to the HRMA (homogeneous real
Monge-Amp\`ere equation). Even in this setting, the problem is
rather involved, and its different aspects are treated separately
in the different articles of the series.   In this article, we
prove that in the $(S^1)^n$-invariant case, the quantum analytic
continuation potential is a Lipschitz continuous subsolution that
is a smooth solution of the HRMA until the `convex lifespan'
$T_\span^\cvx$ of the problem (see Definition
\ref{ConvexLifeSpanDef}). In the sequel \cite{RZ2},  we show that
the quantum analytic continuation potential  fails to solve the
equation even in a weak sense after the convex lifespan. In
\cite{RZ3}, we characterize the smooth lifespan of the HCMA. 
In particular, for the HRMA, we show that the smooth lifespan
$T_\span^\infty$ (see Definition \ref{LifeSpanDef}) of the Cauchy
problem equals the convex lifespan. 
Hence the directions of
smooth geodesic rays are those with infinite convex lifespan.
%Also, existence of alternative weak solutions of HRMA is treated in
%\cite{RZ3}. 

This article and the next one \cite{RZ2} are devoted mainly to
the HRMA and to \K manifolds with symmetry.
However, the quantum analytic continuation potential constructed in this
article (see \S\ref{QuantumApproachSection} and \S\ref{QuantizingSection}), 
and the characterization of the 
smooth lifespan in \cite{RZ3}, 
apply to the HCMA and to general \K manifolds. 
In addition, we believe that the rest of the methods developed here have 
natural extensions at least to the case of Riemann surfaces.

Our study is to a large extent motivated by applications to \K geometry,
that we now briefly describe. Let $(M,J,\omega)$ denote a closed compact 
\K manifold of complex dimension $n$.
Consider the infinite-dimensional space
\begin{equation}
\label{HoEq}
\calH_\o
=
\{\vp\in C^{\infty}(M) \,:\, \omega_\vp:= \omega+\i\ddbar\vp>0\},
\end{equation}
of \K metrics in a fixed \K class equipped with the Riemannian metric \cite{M,S,D1}
\begin{equation}
\label{metric}
g_{L^2}(\zeta,\eta)_{\vp}:= \frac1V\int_M
\zeta\eta\, {\omega_{\vp}^m},\quad \vp \in \hcal_{\o},\quad
\zeta,\eta \in T_{\vp} \hcal_{\o}\isom C^\infty(M).
\end{equation}
One may show that covariant differentiation on $(\calH_\o,g_{L^2})$ is given by
\begin{equation}
\label{HConnectionEq}
D_c e=\dot e-{\textstyle\frac12} g_\vp(\nabla c,\nabla e),
\end{equation}
where $\gamma(s)$ is a curve in $\H_\o$ with $\gamma(0)=\vp,
\dot\gamma(0)=c\in T_\vp\H_\o$ and $e(s)=e(\gamma(s))$
is a vector field on $\H_\o$ along $\gamma$. Here $g_\vp$ is the Riemannian
metric associated to $\o_\vp$ and $\nabla$ is the Levi-Civita connection of $g_\vp$.
Hence, geodesics of $(\calH_\o,g_{L^2})$ are maps $\vp$ from a connected subset $I$ of $\RR$ to $\calH_\o$, equivalently
functions on $I\times M$,
that satisfy the equation
\begin{equation}
\label{HoGeodEq}
\ddot\vp-{\textstyle\frac12} g_\vp(\nabla \dot{\vp},\nabla \dot{\vp})=0,
\quad\hbox{on}\quad (I\setminus\partial I) \times M.
\end{equation}
Extend $\vp$ in a trivial manner to $(I\setminus\partial I)\times \RR \times M$,
i.e., by setting $\vp$ to be $\RR$-invariant, and denote
by $\pi_2$ the projection map from this product to $M$,
and by $\tau=s+\i t$ the holomorphic coordinate on
$(I\setminus\partial I)\times \RR$.
It was observed by Semmes and Donaldson that
\begin{equation}
\label{HCMAEq}
\begin{array}{lll}
&\displaystyle
\frac1{n+1}(\pi_2^\star\omega + \i\ddbar \vp)^{n+1}=
\cr\cr
& \displaystyle
\big(\ddot\vp-{\textstyle\frac12} g_\vp(\nabla \dot{\vp},\nabla \dot{\vp})\big)
\i d\tau\w d\bar\tau\w\o_\vp^n,
\quad\hbox{on}\quad (I\setminus\partial I)\times \RR \times M.
\end{array}
\end{equation}
Therefore, when $\vp$ is regular enough, the geodesic equation is
equivalent to the homogeneous complex Monge-Amp\`ere (HCMA) equation
on the product of a Riemann surface with $M$.

The initial value problem is the problem of defining the
exponential map of $\hcal_{\omega}$. 
Although the Cauchy problem
is ill-posed for the HCMA, infinite geodesic rays are expected to
play an important role in \K geometry and 
this is one motivation to study the IVP
(see \cite{AT,Ch,CTa,CT,D1,M,PS2,PS3,S,Su} for
relevant \K geometry background). Yet the ill-posedness makes
the Cauchy problem very different from the Dirichlet problem
corresponding to geodesics connecting two given end-points,
whose existence and regularity was first studied extensively by 
Chen \cite{Ch}, 
Donaldson \cite{D2}, and Chen-Tian \cite{CT}.
As observed by Mabuchi, Semmes, and Donaldson,  $\hcal_{\omega}$ is
formally an infinite dimensional symmetric space of the type $G^{\C}/G$
where $G $ is the group of Hamiltonian diffeomorphisms of $(M, \omega)$.
Hence its geodesics should be given by certain one-parameter subgroups
of $G^{\C}$, which correspond to analytic continuations in time of
Hamiltonian orbits. To a large extent, the \K quantization
method of this article is an attempt to put these formal arguments
on a rigorous basis.

The article is organized as follows. In Section \ref{QuantumApproachSection}
we describe our approach to the IVP using an analytic continuation
of Toeplitz quantization.
Our main results are stated in Section \ref{StatementResultsSection},
and in Section \ref{SectionBackground} we recall some background.
In Section \ref{QuantizingSection} we construct the quantization
of the Hamiltonian flow. The results in this Section hold on
an arbitrary projective \K manifold. In Section
\ref{TwoQuantizationsSection} we specialize to the setting
of a toric or Abelian variety where we construct a second
quantization of the Hamiltonian flow and compare the two
quantizations and their analytic continuations. In Section \ref{ConvergenceSection}
we complete the proof of our  main result (Theorem \ref{FirstMainThm}),
showing that the analytic continuations of the quantizations converge
to the Legendre transform potential and solve the Cauchy problem
until the convex lifespan.

\bigskip

\section{A Quantum mechanical approach to Monge-Amp\`ere}
\label{QuantumApproachSection}

\bigskip

In this section we define the {\it quantum analytic continuation
potential}  and state the general conjecture that it solves the
IVP for geodesics in $(\calH_\o,g_{L^2})$, to the extent possible,
in the case of projective \K manifolds. The definition  is
inspired by two prior  constructions and is largely aimed at
reconciling them.

The first is a heuristic
analytic continuation argument due to Semmes and Donaldson \cite{S,D}: Let
$\dot\vp_0$ be a smooth function on $M$, considered as a tangent
vector in $T_{\vp_0}\calH_\o$. Let
$X^{\o_{\vp_0}}_{\dot\vp_0}\equiv X_{\dot\vp_0}$ denote the
Hamiltonian vector field associated to $\dot\vp_0$ and
$(M,\omega_{\vp_0})$ and let $\exp t X_{\dot\vp_0}$ denote the
associated Hamiltonian flow. Then let $\exp \i s X_{\dot{\vp}_0}$
``be" its analytic continuation in time to the Hamiltonian flow at
``imaginary" time $\i s$. Then ``define" the {\it classical
analytic continuation potential} $\vp_s$ with initial data
$(\vp_0, \dot{\vp}_0)$  by
\begin{equation}
\label{SOL}
(\exp \i s X_{\dot{\vp}_0})^\star \omega_0 - \omega_0 = \i\ddbar \vp_s.
\end{equation}
Then $\vp_s$ ``is" the solution of the initial value problem.
We use quotes since there is no obvious reason why
$\exp t X_{\dot{\vp}_0}$, a rather arbitrary smooth
Hamiltonian flow, should admit an analytic continuation in $t$ for any
length of time. When the analytic continuation does exist, e.g., if
$\omega_{\vp_0}$ and $\dot{\vp}_0$ are real analytic, then $\vp_s$ solves
the initial value problem for the Monge-Amp\`ere equation for $s$ in some (usually) small time interval.

The second construction uses finite dimensional approximations
deriving from \K quantization.  The idea is to approximate the
space $\calH_\o$ by finite-dimensional spaces of Bergman (or
Fubini-Study)  metrics induced by holomorphic embeddings of  $M$
into $\PP^N$ using bases of holomorphic sections $s \in H^0(M,
L^k)$ of high powers of a polarizing line bundle. Following an
original idea of Yau and Tian, such embeddings were used in
\cite{T,C,Z3} to approximate individual metrics. Phong-Sturm
\cite{PS1,PS2} then introduced a \kahler quantization method to
approximate geodesic segments with fixed end-points by geodesics in
the space of Bergman metrics. They also used the method to define
geodesic rays from test configurations. Further work on Bergman
approximations to geodesics, as well as more general harmonic
maps, are due to Berndtsson, Chen-Sun, Feng, Song-Zelditch, and
others \cite{B1,B2,CS, Fe, RZ1,SoZ1,SoZ2}.

Our approach combines the two as follows: we define the analytic
continuation of $\exp t X_{\dot{\vp}_0}$ by quantizing this
Hamiltonian flow, by analytically continuing the quantum flow, and
then by taking a kind of logarithmic classical limit of its
Schwartz kernel.

Consider the Hilbert spaces of sections $L^2(M, L^N), N\in\NN$,
associated to powers of a Hermitian line bundle $(L,h_0)$ polarizing $(M,\o_{\vp_0})$,
and the corresponding orthogonal projection operators
$$
\Pi_N\equiv\Pi_{N,\vp_0}:L^2(M, L^N) \to H^0(M, L^N),
$$
onto the Hilbert subspaces $H^0(M,L^N)$ of holomorphic sections.
These Hilbert subspaces allow one to `quantize' $(M,\omega_{\vp_0})$. In order
to quantize the Hamiltonian flow of $X_{\dot\vp_0}$ on $(M,\omega_{\vp_0})$ we use
the method of Toeplitz quantization. Namely, we consider the operators
$$
\Pi_{N}\circ\dot\vp_0\circ\Pi_{N},
$$
where here $\dot\vp_0$ denotes the operator of multiplication by $\dot\vp_0$.
We
will usually omit the composition symbols and denote these by
$\Pi_{N}\dot\vp_0\Pi_{N}$.
These are zero-order self-adjoint operators.
Define the associated one-parameter subgroups of unitary operators

\begin{equation} \label{UNDEF}
U_N(t):=\Pi_N e^{\i tN \Pi_N\dot\vp_0\Pi_N}\Pi_N \end{equation}
\smallskip
\noindent
on $H^0(M,L^N)$.

A key observation is that there is no obstruction to analytically
continuing the quantization: each $U_N(t)$ admits an analytic
continuation in time $t$ and induces the imaginary time semi-group

\begin{equation}
\label{UKDEF}
U_N(\i s)  : H^0(M, L^N) \to H^0(M, L^N), \quad U_N(\i s)\in GL(H^0(M, L^N),\CC).
\end{equation}

\smallskip
\noindent The main idea of this article is that  the analytic
continuation of $\exp t X_{\dot\vp_0}$ can be constructed by
taking a non-standard kind of logarithmic  classical limit of the
analytic continuation of its quantization. We do this by
considering the Schwartz kernel $U_N(-\i s)(z,w)$ of this operator
with respect to the volume form $(N\omega_{\vp_0})^n$.

\begin{definition}
\label{QuantumAnalyticPotentialDef}
Set
\begin{equation}
\label{NAnalyticContPotential}
\vp_N(s,z):= \frac{1}{N} \log U_N(-\i s,z,z).
\end{equation}
We define the {\it quantum  analytic continuation potential} $\vp_\infty$ by
$$
\vp_\infty(s,z)
:=
\lim_{l \to \infty} (\sup_{N \geq l}\vp_N)_\uscreg(s,z).
$$
\end{definition}

Here,
$u_\uscreg(z_0):= \lim_{\epsilon \to 0} \sup_{|z - z_0| < \epsilon} u(z)$
denotes the upper semi-continuous regularization of $u$.
The limit on the right hand
side exists and is $\pi_2^\star\o$-plurisubharmonic,
since it is a limit of a sequence of decreasing
$\pi_2^\star\o$-psh functions (\cite{De2}, \S I.5).

This limit is quite different from the semi-classical limits
studied in Toeplitz quantization, because the analytic
continuation in time destroys the Toeplitz structure of the
kernel. Moreover, the  logarithmic asymptotics of the Schwartz
kernel is quite unrelated to  symbol asymptotics. One may think of
it as extracting an analytic continuation of the `phase function'
of the Toeplitz operator; the `symbol' of the Toeplitz operator is
irrelevant.

Denote by
$$
S_T:=[0,T]\times\RR
$$

\smallskip
\noindent
the (vertical) strip of width $T$ in $\CC$.
The IVP for geodesics is equivalent to the following
Cauchy problem for the homogeneous complex Monge-Amp\`ere equation:

\begin{equation}
\label{HCMARayEq}
\left\{
\begin{array}{rrl}
(\pi_2^\star\omega + \i\ddbar \vp)^{n+1}
\!\!\!& = & \!\!\!
0 \quad\quad\;\,\mskip2mu \mbox{on} \; S_{T} \times M,
\cr\cr
\vp(0,s,\,\cdot\,)
\!\!\!& = &\!\!\!
\vp_0(\,\cdot\,)
\;\; \mbox{on} \; \{0\}\times\RR \times M,
\cr\cr
\displaystyle
\frac{\partial\vp}{\partial s}(0,s,\,\cdot\,)
\!\!\!& = &\!\!\!
\dot\vp_0(\,\cdot\,) \;\; \mbox{on} \; \{0\}\times\RR \times M,
\end{array} \right.
\end{equation}

\bigskip
\noindent
Note here that the complex Monge-Amp\`ere operator is well-defined on bounded plurisubharmonic functions
\cite{BT1,BT2}.

\begin{definition}
\label{LifeSpanDef}
We define the  smooth lifespan (respectively, lifespan)
of the Cauchy problem (\ref{HCMARayEq}) 
to be the supremum over all $T\ge 0$ such that
(\ref{HCMARayEq}) admits a smooth (respectively $\pi^\star_2\omega$-psh) solution.
We denote the smooth lifespan (respectively, lifespan) for the Cauchy data
$(\o_{\vp_0},\dot\vp_0)$
by  $T^\infty_\span\equiv T^\infty_\span(\o_{\vp_0},\dot\vp_0)$ (respectively,
$T_\span\equiv T_\span(\o_{\vp_0},\dot\vp_0)$).
\end{definition}

\begin{definition}
Define the quantum lifespan $T_\span^Q$ 
of the Cauchy problem (\ref{HCMARayEq}) 
to be supremum over all $T\ge0$ such that
the {\it quantum  analytic continuation potential} $\vp_\infty$ solves the
HCMA (\ref{HCMARayEq}).
\end{definition}

 We pose the following
conjecture, which would give a general method to solve the
ill-posed Cauchy problem for the HCMA to the extent possible.

\begin{conj}
\label{FirstMainConj}

The quantum analytic continuation potential $\vp_\infty$ solves the HCMA (\ref{HCMARayEq})
for as long as it admits a solution. In other words, $T_\span^Q=T_\span$.

\end{conj}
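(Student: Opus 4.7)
The plan is to first observe that one inequality, $T_\span^Q\leq T_\span$, is tautological: if $\vp_\infty$ is a $\pi_2^\star\omega$-psh solution of the HCMA on $S_T\times M$ then $T\leq T_\span$ by Definition \ref{LifeSpanDef}. The substantive content of Conjecture \ref{FirstMainConj} is the reverse inequality, namely that $\vp_\infty$ is an honest $\pi_2^\star\omega$-psh HCMA solution on the full lifespan. I would establish this in three steps: (i) show each $\vp_N$ is $\pi_2^\star\omega$-psh on $\CC\times M$, so that $\vp_\infty$ is well-defined as a decreasing limit of regularized suprema of psh functions; (ii) verify that $\vp_\infty$ realizes the Cauchy data in \eqref{HCMARayEq}; (iii) identify $\vp_\infty$ with the unique psh HCMA solution $\vp$ on $[0,T_\span)\times\RR\times M$ via a comparison argument.

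For step (i), diagonalize $A_N:=\Pi_N\dot\vp_0\Pi_N$ in an orthonormal basis $\{s_j^{(N)}\}$ of $H^0(M,L^N)$ with real eigenvalues $\mu_j^{(N)}$. Using $\Re\tau=(\tau+\bar\tau)/2$,
\begin{equation*}
U_N(-\i\Re\tau,z,z)=\sum_j e^{\Re\tau\cdot N\mu_j^{(N)}}|s_j^{(N)}(z)|_{h_0^N}^2=\sum_j |e^{\tau N\mu_j^{(N)}/2}s_j^{(N)}(z)|_{h_0^N}^2,
\end{equation*}
exhibiting the $\RR$-invariant extension of $U_N(-\i s,z,z)$ as a sum of squared $h_0^N$-norms of the sections $h_j^{(N)}(\tau,z):=e^{\tau N\mu_j^{(N)}/2}s_j^{(N)}(z)$, each holomorphic in $(\tau,z)\in\CC\times M$. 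Hence $\vp_N=\frac{1}{N}\log\sum_j|h_j^{(N)}|_{h_0^N}^2$ is $\pi_2^\star\omega$-psh on $\CC\times M$ by the standard Bergman potential argument, and so is $\vp_\infty$. For step (ii), the initial value $\vp_N(0,z)=\frac{1}{N}\log\Pi_N(z,z)$ converges uniformly to $\vp_0$ by the Tian-Yau-Zelditch expansion of the Bergman kernel, while $\partial_s\vp_N(0,z)=\Pi_N\dot\vp_0\Pi_N(z,z)/\Pi_N(z,z)$ converges uniformly to $\dot\vp_0$ by Boutet de Monvel-Guillemin Toeplitz symbol calculus, since $\dot\vp_0$ is the principal symbol of $A_N$.

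Step (iii) is where the full generality of the HCMA enters. I would first treat real analytic Cauchy data, for which the Semmes-Donaldson classical analytic continuation potential $\vp^{\rm cl}$ defined by \eqref{SOL} exists on some short interval $[0,\epsilon)$ and there coincides with the smooth HCMA solution $\vp$. Applying complex stationary phase to the Boutet de Monvel-Sj\"ostrand oscillatory integral representation of the Szeg\"o kernel underlying $\Pi_N$, together with the semiclassical asymptotics of $e^{sNA_N}$, I would locate the relevant saddle point of the phase of $U_N(-\i s,z,z)$ on the graph of $\exp\i s X_{\dot\vp_0}$ and extract the leading exponential $e^{N\vp^{\rm cl}(s,z)}$, giving $\vp_N\to\vp^{\rm cl}$ pointwise on $[0,\epsilon)\times M$. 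Combined with the subsolution property from (i), the Bedford-Taylor comparison principle applied to the $\pi_2^\star\omega$-psh function $\vp_\infty$ against the smooth HCMA solution $\vp$ would force $\vp_\infty\equiv\vp$ on the smooth lifespan $[0,T_\span^\infty)\times\RR\times M$. For general smooth Cauchy data one approximates by real analytic data; extension from $T_\span^\infty$ to the full weak lifespan $T_\span$ uses Bedford-Taylor continuity of the Monge-Amp\`ere operator under decreasing sequences of bounded psh functions.

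The main obstacle lies in the complex stationary phase step, where one must control the saddle points of the phase of $U_N(-\i s,z,z)$ uniformly on intervals reaching $T_\span$ --- possibly well beyond the existence interval of the classical Hamiltonian flow $\exp\i s X_{\dot\vp_0}$ itself. Past the classical breakdown, the saddle points typically coalesce, migrate to the boundary of the Grauert tube around $M$, or cease to be isolated, and the operator $U_N(-\i s)$ is no longer a Toeplitz operator to which standard parametrix methods apply. Controlling $\vp_N$ through this degeneration, either by a Sj\"ostrand-type FBI analysis of almost-holomorphic extensions of the Szeg\"o kernel or by a Berndtsson-style positivity-of-direct-image argument that bypasses stationary phase entirely, is the principal difficulty and is presumably why the result is posed as a conjecture. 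The success of the torus-invariant case proved as Theorem \ref{FirstMainThm} of this paper rests precisely on sidestepping this saddle point analysis: abelian symmetry linearizes the Hamiltonian flow and replaces complex stationary phase with an elementary Legendre transform computation, a simplification unavailable in the general HCMA.
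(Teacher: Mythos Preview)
This statement is a \emph{conjecture}, not a theorem, and the paper offers no proof of it in the general projective setting. The paper's contribution toward Conjecture~\ref{FirstMainConj} is Theorem~\ref{FirstMainThm}, which establishes the torus-invariant case up to the convex lifespan, together with the companion results announced from \cite{RZ2,RZ3}. There is thus no ``paper's own proof'' to compare against; you have read the situation correctly and framed your answer as a research outline, explicitly identifying the stationary-phase analysis past the classical breakdown as the principal obstruction.

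That said, two points in your outline deserve sharpening even as a programme. Step~(i) is fine and is essentially what the paper has in mind when it asserts, just after Definition~\ref{QuantumAnalyticPotentialDef}, that $\vp_\infty$ is well-defined and $\pi_2^\star\omega$-psh. In step~(ii), convergence of $\vp_N(0,\cdot)$ and $\partial_s\vp_N(0,\cdot)$ does not automatically guarantee that the \emph{upper-semicontinuously regularized} limit $\vp_\infty$ attains the Cauchy data; one must check that the regularization does not spoil the boundary trace.

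The more serious gap is in step~(iii). The Bedford--Taylor comparison principle is a \emph{Dirichlet} tool: it compares psh functions with ordered Monge--Amp\`ere measures and ordered boundary values. On the strip $S_T\times M$ only the Cauchy face $\{s=0\}$ is prescribed, and you have no a~priori control of $\vp_\infty$ versus $\vp$ on $\{s=T\}$, so the comparison principle gives at best one inequality, and only after matching data at $s=T$. Uniqueness of weak psh solutions to the Cauchy problem \eqref{HCMARayEq} is not known in general --- the paper cites \cite{BB} only for $C^3$ uniqueness --- so even granting a short-time stationary-phase identification $\vp_\infty=\vp$ on $[0,\epsilon)$, propagating that identity to $[0,T_\span)$ does not follow from comparison alone. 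In the toric proof of Section~\ref{ConvergenceSection} the paper uses no comparison argument whatsoever: it obtains two-sided bounds on the monomial sums directly, which is precisely the mechanism unavailable in the general case and the reason the statement remains a conjecture.
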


As mentioned above, the key difficulty in the analysis is that
although  $U_N(t, z, w)$ is a standard Toeplitz Fourier integral
operator  quantizing the Hamilton flow of $\dot\vp_0$,   its
analytic continuation $U_N(-\i s, z,z)$ lies outside the class  of
complex Fourier integral operators, and it is difficult to analyze
its logarithmic asymptotics or  to determine how  regular the
limit should be.  The toric setting provides a testing ground
where it is possible to make a complete analysis. We only give the
details for toric \kahler manifolds, but as in \cite{Fe}, the same
methods apply to Abelian varieties.

\bigskip
\section{Statement of results}
\label{StatementResultsSection}
\bigskip

The main results of this article concern the Cauchy problem for
the HRMA. While the Dirichlet problem for the HRMA has been
extensively studied
(see \cite{RT,
CNS,
GTW,Gz} and references therein), the Cauchy problem has not been
systematically  investigated. We are only aware of
\cite{BB} that proves uniqueness of $C^3$ solutions for the
Cauchy problem for the more general HCMA,  of \cite{Fo1,Fo2},
where a sufficient condition on the Cauchy
data is given for existence of a smooth short-time solution of HRMA depending
on the Cauchy hypersurface
(for our Cauchy hypersurface, the existence of a smooth short-time solution
is not an issue, since it follows independently from a classical Legendre duality argument),
and of \cite{U} where an explicit formula is derived for smooth solutions
of the 2-dimensional HRMA.

In general, the HRMA can be viewed as a special case of the HCMA under the presence
of sufficient symmetry. In the setting of the HCMA (\ref{HCMAEq}) corresponding
to the IVP for geodesics, the reduction to a HRMA precisely corresponds
to restricting from a general projective variety to a toric or Abelian one.
Let us now describe briefly this geometric setting, concentrating on
the toric case (for more background see \S\S\ref{ToricBackgroundSubsection}).

%%%%%%%%%%%%%

\def\bfT{{\bf T}}

A toric \K manifold is a \K manifold $(M,J,\o)$ that admits a holomorphic action of a complex
torus $(\CC^\star)^n$ with an open dense orbit, and for which the \K form $\o$ is toric, i.e.,
invariant under the action of the real torus
$$
\bfT:=(S^1)^n.
$$
We assume that the Cauchy data $(\omega_{\vp_0},\dot\vp_0)$ is toric,
and consider the IVP for geodesics in the space of torus-invariant \K metrics.
Over the open orbit
$$
M_\open\isom(\CC^\star)^n\isom \RR^n\times \bfT
$$
the \K form $\omega_{\vp_0}$ is exact and $\bfT$-invariant and so
we let $\psi_0$ be a smooth strictly convex function on $\RR^n$
satisfying
\begin{equation}
\label{PsiZeroDefEq}
\o_{\vp_0}|_{M_\open}=\i\ddbar\psi_0.
\end{equation}
Here $[\o]$ is any integral \K class in $H^2(M,\ZZ)$. The initial
velocity $\dot\vp_0$ is also $\bfT$- invariant, and so it induces,
by restriction to the open orbit, a smooth bounded function on $\RR^n$,
that we denote by $\dot\psi_0$. Analytically, the
IVP is then equivalent to studying the following HRMA for a convex
function $\psi$ on $[0,T]\times\RR^n$,

\begin{equation}
\label{HRMARayEq}
\left\{
\begin{array}{rrl}
\h{\rm MA}\, \psi
\!\!\!& = & \!\!\!
0, \quad\quad\;\,\mskip2mu \mbox{on} \; [0,T] \times \RR^n,
\cr\cr
\psi(0,\,\cdot\,)
\!\!\!& = &\!\!\!
\psi_0(\,\cdot\,),
\;\; \mbox{on} \;  \RR^n,
\cr\cr
\displaystyle
\frac{\partial\psi}{\partial s}(0,\,\cdot\,)
\!\!\!& = &\!\!\!
\dot\psi_0(\,\cdot\,), \;\; \mbox{on} \; \RR^n.
\end{array} \right.
\end{equation}
\smallskip

Here, $\h{\rm MA}$ denotes the real Monge-Amp\`ere operator that can be
defined as a Borel measure on convex functions

$$
\h{\rm MA}\, f:=
d\frac{\partial f}{\partial x^1}\w \cdots\w d\frac{\partial f}{\partial x^{n+1}},
\quad \h{for } f \h{\ convex on } \RR^{n+1},
$$

\smallskip
\noindent and equals $\det\nabla^2 f\, dx^1\w\cdots\w dx^{n+1}$ on
$C^2$ functions \cite{RT}.

%%%%%%%%%%%

Let 
$$
P:=\overline{\Im\nabla\psi_0}\subset\RR^n.
$$ 
Recall that on a
symplectic toric manifold the Legendre transform $f\mapsto
f^\star$ is a bijection between the set of $\bfT$-invariant \K
potentials on the open orbit $M_\open\isom (\CC^n)^\star$ of the
(complex) torus action

$$
\calH(\bfT):=\{\psi\in C^\infty(\RR^n)\,:\, \i\ddbar\psi=\o_\vp|_{M_\open} \hbox{\ with\ } \vp\in\calH_\o
\h{\ and\ } \overline{\Im\nabla\psi}=P\},
$$

\smallskip
\noindent
and the set of
symplectic potentials on the moment polytope $P\subset \RR^n$

\begin{equation}
\label{LHTDefEq}
\calL\calH(\bfT):=\{u\in C^\infty(P\setminus\partial P)\cap C^0(P)\,:\, u=\psi^\star
\hbox{\ with\ } \psi\in\calH(\bfT)\}.
\end{equation}

\smallskip
\noindent
When the latter space is equipped with the standard $L^2(P)$ metric, this map is in fact an isometry
and transforms the IVP geodesic equation (\ref{HoGeodEq}) to the linear equation
\begin{equation}
\label{IVPToricGeodEq}
\ddot u=0, \quad u_0=\psi^\star_0, \quad \dot u_0=-\dot\psi_0\circ(\nabla \psi_0)^{-1},
\end{equation}
whose solution is given by
$$
u_s:=u_0+s\dot u_0.
$$

\begin{definition}
\label
{ConvexLifeSpanDef}
Define the convex lifespan of the Cauchy problem (\ref{HRMARayEq}) as
$$
T_\span^\cvx(\psi_0,\dot\psi_0):=
\,\sup\,\{\,s\,:\, \psi_0^\star-s\dot\psi_0\circ(\nabla\psi_0)^{-1} \h{\ is convex on $P$}\,\}.
$$
\end{definition}

\smallskip
\noindent
We note that $T_\span^\cvx$ is independent of the choice of $\psi_0$
satisfying (\ref{PsiZeroDefEq}).

At least as long as $s<T_\span^\cvx$, i.e., $u_s$ is strictly convex
and hence belongs to $\calL\calH(\bfT)$,
it is well-known that the IVP for geodesics
has an explicit solution,

\begin{equation}
\label{IVPToricGeodFormula}
\psi(s,x)=\psi_s(x):= (u_0 + s\dot{u}_0)^\star(x),\quad s\in[0,T_\span),\; x\in\RR^n.
\end{equation}

\smallskip
\noindent
For a review of this fact and references
we refer to \cite{RZ2}.
We call $\psi$ the {\it Legendre transform potential}.

What is less transparent is what happens when $s>T_\span^\cvx$.
Firstly, it should be pointed out that, as defined in
(\ref{IVPToricGeodFormula}), $\psi_s$ is  
finite for each $x\in\RR^n$. Hence, it is necessarily
Lipschitz. Moreover, as we show in \cite{RZ2},
$\psi_s$ is strictly convex, but not differetiable
everywhere.

%\begin{mainprop}
%\label
%{FirstMainProp}
%For each $s>0$, the function
%$$
%\psi_s(x):= (u_0 + s\dot{u}_0)^\star(x),\quad x\in \RR^n,
%$$
%is a continuous strictly convex function on $\RR^n$.
%\end{mainprop}

Denote by $\H^{0,1}(\bfT)$ the closure of $\calH(\bfT)$
with respect to the $C^{0,1}$-norm (this space contains also convex functions
that are not strictly convex).
The corresponding space of
$\o$-psh (plurisubharmonic) functions will be denoted by $\H_\o^{0,1}$. %(again, observe that this
According to the previous paragraph,
one has $\psi_s\in\H^{0,1}(\bfT)$ for all $s>0$.
It therefore makes sense to consider $\psi$ as an infinite ray
in the interior of $\calH^{0,1}(\bfT)$.

%Another elementary fact we need concerns the behavior of $\psi_s$
%at infinity. It is stated in terms of its Legendre dual $u_s^{\star\star}$.
%Its proof also appears in the sequel \cite{RZ2}.
%
%\begin{mainprop}
%\label
%{SecondMainProp}
%For each $s>0$, the function $u_s^{\star\star}$ is continuous
%and essentially smooth on $P$,
%namely $u_s^{\star\star}\in C^1(P\setminus \del P)\cap C^0(P)$ and
%$$
%\lim_{y\ra\del P} |\nabla u_s^{\star\star}(y)|=\infty.
%$$
%\end{mainprop}
%
%In particular, this Proposition essentially guarantees that
%the correspondence between the divisor at infinity and $\del P$
%continues to hold also for the potentials $\psi_s$.

Our main result in this article states that the sequence of level $N$ quantum
analytic continuation potentials
$\vp_N$ defined by (\ref{NAnalyticContPotential})
converges uniformly to the Legendre transform potential $\psi$, and
therefore the quantum analytic continuation potential $\vp_\infty$
of Definition \ref{QuantumAnalyticPotentialDef}
solves the HCMA for $T<T_\span^\cvx$.

\begin{maintheo}
\label{FirstMainThm}
Let $\vp:=\psi-\psi_0$ be the one-parameter family of
Lipschitz continuous $\omega$-psh potentials associated to the Legendre transform
potential $\psi$ given by (\ref{IVPToricGeodFormula}),
and let $\vp_N$ be the quantum
analytic continuation potentials given by (\ref{NAnalyticContPotential}).
Then
$$
\lim_{N\ra\infty}\vp_N = \vp
$$
in $C^2([0, T] \times M)$ for $T<T^\cvx_{\span}$, and in $C^0([0, T]
\times M)$ for $T\ge T^\cvx_{\span}$.
In particular, the quantum analytic continuation potential coincides with the
Legendre transform potential
$$
\vp_\infty=\vp\in \H_\o^{0,1}.
$$
\end{maintheo}

\medskip

In the sequel, we prove that the quantum analytic continuation
potential $\vp$ ceases to solve the HCMA (\ref{HCMARayEq}) for any
$T>T^\cvx_\span$. Moreover, we show that on a dense set,
whose complement has zero Lebesgue measure, it
does solve the equation. We state the result in terms of the
failure to solve the corresponding HRMA (\ref{HRMARayEq}), that
corresponds to the HCMA on the open orbit $M_\open$. Let
$$
\Delta(\psi):= \{\,(s,x)\,:\, \psi \h{\rm\  is finite and differentiable
at $(s,x)$\,}\}\subset \RR_+\times\RR^n,
$$ 
denote the regular locus of $\psi$, and let
$$
\Sigma_\sing:=\,\RR_+\times\RR^n\;\sm\, \Delta(\psi),
$$
denote its singular locus. Since $\psi$
is everywhere finite, the former is dense while 
the latter has Lebesgue measure zero in
$\RR_+\times\RR^n$.
Set,
$$
\Sigma_\sing(T):=\,[0,T]\times\RR^n\;\sm\, \Delta(\psi).
$$

\begin{maintheo}
\label{SecondMainThm} {\rm (See \cite{RZ2}.)} %Let $T>0$. Then  \\
%\medskip 
%\noindent 
(i)\; $\psi$ solves the HRMA (\ref{HRMARayEq}) on
the dense regular locus,
$$\MAop\psi=0\quad \h{\ on\ }\quad \Delta(\psi)\subset\RR_+\times
\RR^n.
$$In addition, $[0,T_\span^\cvx)\times\RR^n\subset
\Delta(\psi)$.\hfill\break
%\\ \medskip
%\noindent 
(ii)\; Whenever $T>T_\span^\cvx$, $\psi$ fails to solve
the HRMA (\ref{HRMARayEq}). In particular, the Monge-Amp\`ere
measure of $\psi$ charges the set $\Sigma_\sing(T)$ with positive
mass, % and we have,
$$
\int_{[0,T]\times\RR^n} \MAop \psi =\int_{\Sigma_\sing(T)} \MAop
\psi>0.
$$
Equivalently, $\vp=\vp_\infty$ ceases to solve the HCMA
(\ref{HCMARayEq}), when $T>T_\span^\cvx$. However, it does solve
the HCMA on a dense set in $S_T\times M$.

\end{maintheo}

It is  well-known that the Legendre transform linearizes the HRMA,
and hence that the Legendre transform potential $\psi$ is a solution as
long as it is sufficiently smooth or equivalently as long as the
symplectic potential is strictly convex. It does not seem to have been
observed before that the Legendre transform potential fails to solve the
HRMA as soon as it ceases to be differentiable.
Theorems \ref{FirstMainThm} and \ref{SecondMainThm} come close to
settling Conjecture \ref{FirstMainConj} in the case of toric or
Abelian varieties. They leave open the possibility that there
exists an alternative method to solve the HRMA. That possibility is
investigated in \cite{RZ3}, where it is shown that the Legendre
solution is in a sense the optimal subsolution among several
natural approaches.

In order to prove Theorem \ref{FirstMainThm} we first show that
the operators $U_N$ quantize the Hamiltonian flow of
$X_{\dot\vp_0}^{\o_{\vp_0}}$. This result holds on any projective
\K manifold and does not make use of symmetry. The proof is based
on   the Toeplitz calculus developed by Boutet de
Monvel-Sj\"ostrand \cite{BSj} and Boutet de Monvel-Guillemin
\cite{BG}. We then show that $U_N$ is well approximated by a
second type of quantization that uses the symplectic potential.

The analysis of the logarithmic asymptotics of $U_N(-\i s, z,z)$
is closely related to the analysis of families of toric Bergman metrics
in \cite{SoZ1,Z4}, and these techniques allow us to compute the asymptotic
spectrum of these operators and conclude the $C^2$ convergence up
to $T<T_\span^\cvx$. Finally, we prove the global $C^0$
convergence to the Legendre transform subsolution.
%The convex analysis results of \cite{RZ2} are crucial here (Propositions
%\ref{FirstMainProp} and \ref{SecondMainProp}).
The logarithmic
classical limit  is closer to large deviations theory than to
semi-classical Toeplitz analysis  since it involves the analytic
continuation in time of the Toeplitz quantization and not the
quantization itself.

\subsection{Further results}

As mentioned above, we prove in  \cite{RZ2} that the Legendre
transform potential fails to solve the equation even in a weak
sense after the convex lifespan. Consequently the quantization
method fails to solve the equation after this time, at least
in the case of the HRMA.

But it is plausible that the quantization method produces the
solution as long as a weak solution exists, and that it is in some
sense the ``optimal" sub-solution. To prove this, it is necessary to
investigate whether there  exist other ways of solving the Cauchy
problem after the convex lifespan. This is initiated in a
subsequent article \cite{RZ3} in the series where we 
characterize the smooth lifespan of the more general HCMA
in terms of analytic continuation of Hamiltonian dynamics.
In the case of the HRMA this characterization shows
precisely that $T_\span^\infty=T_\span^\cvx$, and 
hence no smooth solution exist beyond the convex lifespan.
By Theorem \ref{FirstMainThm} this shows that the 
quantization approach solves the Cauchy problem 
for as long as a smooth solution exists.

We also introduce  the notion of
a leafwise subsolution, and show that the Legendre transform
potential is the unique leafwise subsolution to the Cauchy
problem. Also, in a further sequel we show that in a certain class of admissible
subsolutions it is impossible to solve the Cauchy problem for the
HRMA beyond the convex lifespan. 
This comes sufficiently  close to confirming Conjecture
\ref{FirstMainConj} in the cases of toric \kahler manifolds and
Abelian varieties with Cauchy data invariant under $(S^1)^n$.

Among \kahler manifolds without large symmetry, it seems most
feasible to study the Cauchy problem for HCMA on a Riemann
surface. The results and methods of this series suggest a general
conjecture on the lifespan of solutions in that case. We plan to
discuss it elsewhere.

\bigskip
\section{
Background
}
\label{SectionBackground}
\bigskip

%%%%%%%%%

\subsection{K\"ahler quantization}
\label
{KahlerQuantSubsection}
\medskip

Our setting consists of a \K manifold $(M, \omega)$ of complex
dimension $n$ with $[\omega] \in H^2(M, \Z)$. Under this
integrality condition, there exists a positive Hermitian
holomorphic line bundle $(L, h) \to M$ whose  curvature form is
given locally by
$$
\o\equiv\o_h=-\frac{\sqrt{-1}}{2\pi}\ddbar \log \|e_L\|_h\;,
$$ where $e_L$ is a
nonvanishing local holomorphic section of $L$, and where
$\|e_L\|_h=h(e_L,e_L)^{1/2}$ denotes the $h$-norm of $e_L$.

The Hilbert spaces `quantizing' $(M, \omega)$  are then defined to
be the  spaces
$$
H^0(M,L^N)
$$
of holomorphic sections of
$L^N=L\otimes\cdots\otimes L$.  The metric $h$ induces Hermitian
metrics $h^N$ on $L^N$ given by $\|s^{\otimes
N}\|_{h_N}=\|s\|_h^N$. We give $L^2(M,L^N)$ the inner product
\begin{equation}
\label{HilbDef} ||s||^2_{\Hilb_N(h)}:=  \frac1V\int_M |s|_{h^N}^2
(N\o_h)^n.
\end{equation}
We then define the \Szego kernels as the Schwartz kernels
$\Pi_N(z,w)$ of the orthogonal projections $\Pi_N:L^2(M, L^N) \to
H^0(M, L^N)$ with respect to this inner product, so that
\begin{equation}
(\Pi_N s)(y)=\int_M \Pi_N(x,y)s(x)(N\o(x))^n, \quad s\in L^2(M,
L^N).
\end{equation}
(Note that $\Pi_N$ depends on $h$ although we omit that from the
notation.)

Instead of dealing with sequences of Hilbert spaces, observables
and unitary operators on $M$, it is convenient to lift them to the
circle bundle
$$
X=\{\la \in L^\star : \|\la\|_{h^{-1}}= 1\},
$$
where $L^\star$ is the dual line bundle to $L$, and  where $h^{-1}$ is
the norm on $L^\star$ dual to $h$. Let us now describe the lifted
objects.

Let $\rho$ be the function $||\lambda||_{h^{-1}}-1$ on $L^\star$.
Associated to $X$ is the contact form
$\al=-\i\partial\rho|_X=\i\dbar\rho|_X$ and  the volume form
\begin{equation}
\label{dvx} (d\al)^n\wedge\al=\pi^\star\o^n\wedge \al.
\end{equation}
We let $r_{\theta}w=e^{\i\theta} w,\; w\in X$, denote the $S^1$
action on $X$ and denote its infinitesimal generator by
$\frac{1}{\i}\frac{\partial}{\partial\theta}$. Holomorphic sections then lift
to elements of the  Hardy space $H^2(X) \subset L^2(X)$ of
square-integrable CR functions on $X$, i.e., functions that are
annihilated by the Cauchy-Riemann operator
$\dbar_b:=\pi^{0,1}\circ d$ (where $TX\otimes_{\RR}\CC=
T^{1,0}X\oplus T^{0,1}X\oplus\CC\frac{\partial}{\partial\theta}$
and $\pi^{0,1}$ is defined as the projection onto the second
factor) and are $L^2$ with respect to the inner product
\begin{equation}
\label{unitary}
\langle  F_1, F_2\rangle
=
\frac{1}{2\pi V}\int_X
F_1\overline{F_2}\;(d\al)^n\wedge\al ,\quad F_1,F_2\in L^2(X).
\end{equation}

The $S^1$ action on $X$ gives a representation of $S^1$ on
$L^2(X)$ with irreducible pieces denoted $L^2_N(X)$. We thus have
the Fourier decomposition,
\begin{equation}
\label{LtwoXBlockDecompositionEq} 
L^2(X)
=
\bigoplus_{N\ge0} L^2_N(X).
\end{equation}
We denote by $\D$ the operator on $L^2(X)$ with spectrum $\ZZ$ and
whose $N$-th eigenspace $L^2_N(X)$ consists of functions
transforming by $e^{\i N\theta}$ under the $S^1$ action
$r_{\theta}$ on $X$. Thus, 
\begin{equation} 
\label{DDEF} 
\D 
=
\frac{1}{\i} \frac{\partial}{\partial \theta}, \end{equation} 
the
infinitesimal generator of the $S^1$ action.

Since the $S^1$ action on $X$ commutes with $\bar{\partial}_b$ we
also have $H^2(X) = \bigoplus_{N =0}^{\infty}H^2_N(X)$ where
$$
H^2_N(X):= \{ F \in H^2(X): F(r_{\theta}w) = e^{\i N \theta} F(w)
\}=L^2_N(X)\cap \ker\bar{\partial}_b.
$$
A section $s_N$ of $L^N$ determines an equivariant function
$\hat{s}_N$ on $L^\star$ by the rule
\begin{equation}
\label
{LiftRuleEq}
\hat{s}_N(\lambda)
= \left( \lambda^{\otimes N}, s_N(z)\right), \quad \la\in
L^\star_z,\ z\in M,
\end{equation}
where $\lambda^{\otimes N} = \lambda \otimes \cdots\otimes
\lambda$. We henceforth restrict $\hat{s}$ to $X$ and then the
equivariance property takes the form $\hat s_N(r_\theta w) =
e^{iN\theta} \hat s_N(w)$. Up to a factor of $N^n$ the map
$s\mapsto \hat{s}$ is a unitary equivalence between $H^0(M, L^{
N})$ and $H^2_N(X)$.

We now define the (lifted) \Szego kernel of degree $N$ to be the
Schwartz kernel of the orthogonal projection $\tilde\Pi_N :
L^2(X)\rightarrow H^2_N(X)$. It is defined by
\begin{equation}
\label{PiNF}
\tilde\Pi_N F(w)
=
\frac{1}{2\pi V}\int_X
\tilde\Pi_N(w,v) F(v)\;(d\a)^n\wedge\a\,(v), \quad F\in L^2(X).
\end{equation}
The full \Szego kernel is then
\begin{equation}
\tilde\Pi = \sum_{N=1}^{\infty} \tilde\Pi_N.
\end{equation}

To simplify notation we will from now on omit the tilde from the
lifted projection operators on $X$ and simply write $\Pi,\Pi_N$.

It was proved by Boutet de Monvel and Sjostrand \cite{BSj} (see
also the Appendix to \cite{BG})  that $\Pi$ is a complex Fourier
integral operator (FIO) of positive type,
\begin{equation}
\label{FIO}
\Pi \in I_c^0(X \times X, {\mathcal C})
\end{equation}
associated to a positive canonical relation ${\mathcal C}$. For
definitions and notation concerning complex FIO we refer to
\cite{MS,BSj,BG}. The real points of ${\mathcal C}$ form the
diagonal $\Delta_{\Sigma \times \Sigma}$ in the square  of the
symplectic cone
\begin{equation}
\label{SympCone}
\Sigma
:=
\big\{\big(w,r \alpha(w)\big)\,:\, r > 0,\, w \in X\big\}
\subset
T^\star X,
\end{equation}
where $\alpha$ is the connection, or contact, form. We refer to \cite{BG},
Appendix, Lemma 4.5. Let $\omega_{T^\star X}$ denote the
canonical symplectic form on $T^\star X$, and let
\begin{equation}
\label{SympConeTwo}
\omega_\Sigma:=\omega_{T^\star X}|_\Sigma
\end{equation}
denote its restriction to $\Sigma$, a symplectic form on $\Sigma$.

Finally, recall that a Toeplitz operator is an operator of the
form $\Pi A\Pi$ where $A$ is a pseudo-differential operator, and a
(complex) Toeplitz Fourier integral operator is one where $A$ is
allowed to be a (complex) Fourier integral operator. 
When $A$ is a pseudo-differential operator we denote by $s_A$
its full symbol, and by $\sigma_A$ its principal symbol.
If $B$ is a (complex) Fourier integral operator we denote by
$\sigma_B$ its symbol.
Lastly, the symbol of $\Pi B \Pi$ is given by $\sigma_A|_\Sigma$ \cite{BSj}.

\subsection{Toric \kahler manifolds}
\label{ToricBackgroundSubsection}

We now review some geometry and analysis on toric \kahler
manifolds. Fuller details and exposition can be found in
\cite{A,G,R,RZ1,SoZ1,STZ}.

 Let $\bfT:=(S^1)^n$. A symplectic toric manifold is
a compact closed \K manifold $(M,\o)$ whose automorphism group
contains a complex torus $(\CC^\star)^n$ whose action on a generic
point is an open dense orbit isomorphic to $(\CC^\star)^n$, and for which the real
torus $\bfT\subset (\CC^\star)^n$ acts in a Hamiltonian fashion by
isometries.

We will work with coordinates on the open dense orbit
$$
M_\open\isom (\CC^\star)^n
$$
of the complex
torus given by
\begin{equation}
\label{CoordsOpenOrbitEq}
z=e^{x/2+\i\th}, \quad (x,\th)\in\R^n\times (S^1)^n.
\end{equation}
Let $\o|_{M_\open}=\i\ddbar\psi$.
The work of Atiyah and Guillemin-Sternberg \cite{At,GS2} implies
that the image of the moment map $\nabla\psi$ is a convex
polytope $P\subset\RR^n$ and depends
only on $[\o]$.
We further assume that this is a lattice polytope.
Being a lattice Delzant polytope \cite{De1} means that:
(i) at each vertex meet exactly $n$ edges,
(ii) each edge is the set of points $\{p+tu_{p,j}\,:\, t\ge0\}$ with
$p\in\ZZ^n$ a vertex, $u_{p,j}\in\ZZ^n$ and $\h{span}\{u_{p,1},\ldots,u_{p,n}\}=\ZZ^n$.
Equivalently, there exist outward pointing normal
vectors $\{v_j\}_{j=1}^d\subset\ZZ^n$, with $v_j$ normal
to the $j$-th $(n-1)$-dimensional face of $P$ (also called a facet),
that are primitive
(i.e., their components have no common factor),
and $P$ may be written as
$$
P=\{y\in\RR^n\,:\, l_j(y):=\langle y,v_j\rangle-\lambda_j\ge0,\quad j=1,\ldots,d\},
$$
with $\lambda_j=\langle p,v_j\rangle\in\ZZ$ with $p$ any vertex on the $j$-th facet,
and $y$ the coordinate on $\RR^n$.
Note that the main results in this article extend to orbifold
toric varieties, since we only make essential use of (i).

The \kahler form $\omega$ is the curvature $(1,1)$ form of a
line bundle $L \to M$. A basis for the  space $H^0(M, L)$ of
holomorphic sections is given by the
monomials
$\chi_{\alpha}(z)=z^\alpha$
with $\alpha \in P$.
More generally, $H^0(M,L)$ generates the coordinate ring
$\oplus_{N=1}^\infty H^0(M,L^N)$, and
each lattice point $\gamma$ in $NP$ corresponds to
a section $\chi_{\gamma}$ of $L^{N} \to M$ defined by
\begin{equation}
\chi_{\gamma}=\chi_{\beta_{1}} \otimes \cdots \otimes
\chi_{\beta_{N}},
\end{equation}
where $\beta_{1},\ldots ,\beta_{N} \in P$ such that $\gamma
=\beta_{1}+ \cdots +\beta_{N}$ (see \cite{STZ}).

We now consider the homogenization (lift to $X$)  of toric \kahler
manifolds.  The lattice points
in $N P$ for each $N \in \mathbb{N}$ correspond in $X$  to the
`homogenized' lattice points $\wh{NP} \subset \mathbb{Z}^{n+1}$ of
the form
\[
\wh{\alpha}^{N}=\wh{\alpha}:=(\alpha_{1},\ldots,\alpha_{n},Np
-|\alpha|),\quad \alpha=(\alpha_{1},\ldots,\alpha_{n}) \in NP \cap
\mathbb{Z}^{n},
\]
where  $p=\max_{\beta \in P \cap \mathbb{Z}^{n}}|\beta|$. 
For simplicity, we generally assume henceforth that $p=1$. 
We also define the cone
$$
\Lambda_{P}:=\bigcup_{N=1}^{\infty}\wh{NP}.
$$ 
Rays $\N\wh{\alpha}$ in this cone define the semiclassical limit.

The monomials $\chi_{\alpha}$ lift to  the CR monomials
$\wh{\chi}_{\wh{\alpha}} (w)\equiv\wh{\chi}_{\alpha} (w), \, w\in X$
(see (\ref{LiftRuleEq})), for
$\wh{\alpha} \in \Lambda_{P}$.  They are joint eigenfunctions of a
quantized torus action on $X$. Let
$$
\xi_j
:=
\frac{\partial}{\partial\theta_j},\quad 1\le j\le n,
$$ 
denote the
Hamiltonian vector fields generating the $\bfT$ action on $M$.  We
use the connection form $\alpha$ to define the horizontal lifts
$\xi_j^h$ of
the Hamiltonian vector fields $\xi_j$:
\begin{equation}
\label{XihjEq}
\pi_* \xi^h_{j} = \xi_j,\;\;\; \alpha(\xi^h_j) = 0,\quad 1\le j\le n.
\end{equation}
Let $\xi_j^* \in \R^n$ denote the element of the Lie algebra of
$\bfT$ which acts as  $\xi_j$ on $M$.
We then define the vector fields $\Xi_j$  by:
\begin{equation}
\label{XiEq}
\Xi_j
:=
\xi^h_j
+
2 \pi \i \langle \nabla\psi \circ\pi, \xi_j^* \rangle
\frac{\partial}{\partial \theta}
=
\xi^h_j + 2\pi \i (\nabla\psi \circ\pi)_j\, \frac{\partial}{\partial
\theta},\quad 1\le j\le n.
\end{equation}

Finally, we define the differential operators (lifted action
operators),

\begin{equation}
\hat{I}_{j}
:=
\Xi_{j},\;\;j=1,\ldots,n,\quad
\hat{I}_{n+1}:=\frac{1}{\i}\frac{\partial}{\partial \theta}
-\sum_{j=1}^{n}\Xi_{j}. \label{Qtorus}
\end{equation}
We recall that $\frac{1}{\i} \frac{\partial}{\partial \theta}$ is
abbreviated by $\D$ and note that $\hat{I}_{n +1}$ is not the same
as $\D$.  Then the monomials $\wh{\chi}_{\wh{\alpha}}$ are the
joint CR eigenfunctions of $(\hat{I}_{1},\ldots,\hat{I}_{n+1})$
for the joint eigenvalues $\wh{\alpha} \in \Lambda_{P}$, i.e.,

\begin{equation} 
\label{jointeigen}
\hat{I}_j \wh{\chi}_{\wh{\alpha}} = \hat{\alpha}_j
\wh{\chi}_{\wh{\alpha}},\;\; \wh{\alpha} \in \Lambda_P ,\;\;\;
\bar{\partial}_b \wh{\chi}_{\wh{\alpha}} = 0, \;\; j=1,\ldots,n+1.
\end{equation}

For simplicity of notation, we denote by $D_{\hat{I}}$ the vector
of first-order operators
\begin{equation} 
\label{DI}
D_{\hat{I}}
:=
\frac1{2\pi\i}\Big(\hat{I}_1,\ldots,\hat{I}_n\Big),
\end{equation}
and use the same notation for the quantized torus action on
$H^0(M, L^N)$ and on $X$.

Although we are primarily concerned with holomorphic sections over
$M$ and their lifts as CR holomorphic functions on $X$, we need to
consider non-CR holomorphic eigenfunctions of the action operators
as well.
 We thus need to consider  the anti-Hardy space
$\overline{\hcal}^2(X)$ of anti-CR functions, i.e. solutions
of $\partial_b f = 0$. A Hilbert basis is given by the
complex-conjugate monomials $\overline{\hat{\chi}}_{\hat{\alpha}}.$

Products of eigenfunctions are also
eigenfunctions. Hence, the orthonormal mixed monomials
$$\hat{\chi}_{\hat{\alpha}, \hat{\beta}}(x)
=\hat{\chi}_{\hat{\alpha}}\overline{\hat{\chi}}_{\hat{\beta}}$$ are
eigenfunctions of eigenvalue $\hat{\alpha} - \hat{\beta}$ for
$\{\hat{I}_{1},\ldots,\hat{I}_{n+1}\}$.
It can be shown \cite{STZ} that
\begin{equation}
\label
{LtwoEigenFnsEq}
L^2(X)
=
\bigoplus_{\hat{\alpha}, \hat{\beta} \in
\Lambda_{P}} \C \hat{\chi}_{\hat{\alpha}, \hat{\beta}}.
\end{equation}
It follows that the joint spectrum of
$(\hat{I}_{1},\ldots,\hat{I}_{n+1})$
on $L^2(X)$ is given by
\begin{equation}
\label{SpectrumActionOpEq}
\hbox{\rm Spec}\, (\hat{I}_{1},\ldots,\hat I_{n+1}) 
= 
\Lambda_{P} - \Lambda_{P}
= 
\ZZ^{n+1}.
\end{equation}

\subsection{Convex analysis}
Here we define some basic notation related to convex functions.
For general background on Legendre duality and convexity we
refer the reader to \cite{Ro}.

A vector
$v\in(\RR^n)^\star$ is said to be a subgradient of a function $f$
at a point $x$ if $f(z)\ge f(x)+\langle v,z-x\rangle$ for all $z$.
The set of all subgradients of $f$ at $x$ is called the
subdifferential of $f$ at $x$, denoted $\partial f(x)$.

The Legendre-Fenchel conjugate of a continuous function $f=f(x)$ on $\RR^n$ is defined by
$$
f^\star(y):=\sup_{x\in\RR^n}\big(\langle x,y \rangle - f(x)\big).
$$
For simplicity,  we will refer to $f^\star$ sometimes as the Legendre dual, or just dual,
of $f$.
An open-orbit \K potential $\psi\in\calH(\bfT)$ is a smooth strictly convex function on $\RR^n$ in logarithmic
coordinates. Therefore its gradient $\nabla\psi$ is one-to-one onto $P=\Im\nabla\psi$ and
one has the following explicit expression for its Legendre dual (\cite{Ro}, or \cite{R}, p. 84--87),
\begin{equation}
\label{LegendreDualityEq}
u(y)=\psi^\star(y)=\langle y,(\nabla\psi)^{-1}(y)\rangle-\psi\circ(\nabla\psi)^{-1}(y),
\end{equation}
which is a smooth strictly convex function on $P$, satisfying
\begin{equation}
\label{LegendreDualityGradientEq}
\nabla u(y)=(\nabla\psi)^{-1}(y).
\end{equation}
Following Guillemin \cite{G}, the function $u$ is called the
symplectic potential of $\i\ddbar\psi$.
The space of all symplectic potentials is denoted by
$\calL\calH(\bfT)$.
Put
\begin{equation}
\label{GuilleminFormulaEq}
u_G:=\sum_{k=1}^d l_k\log l_k.
\end{equation}
A result of Guillemin \cite{G} states that for any
symplectic potential $u$ the difference $u-u_G$ is a
smooth function on $P$ (that is, up to the boundary).
In other words, (\ref{LHTDefEq}) may be rewritten as
\begin{equation}
\label{LHTDefSecondEq}
\calL\calH(\bfT)
=\{u\in C^\infty(P\sm\del P)\,:\, u=u_G+F,\quad\h{\rm with } F\in C^\infty(P)\}.
\end{equation}

\bigskip
\section{Quantizing the Hamiltonian flow of $\dot{\vp_0}$}
\label{QuantizingSection}
\bigskip

In this section $(M,\o)$ is an arbitrary projective \K manifold.
The first step in defining the analytic continuation of $\exp t
X_{\dot\vp_0}$ is to quantize this Hamiltonian flow. We use the
method of Toeplitz quantization \cite{BG,Z2} (see also \cite{Z1}, \S5, 
for some exposition). We may state the
result either in terms of one homogeneous Fourier integral
operator on $L^2(X)$ or as a semi-classical Fourier integral
operator on each of the spaces $L^2_N(X)$ in the decomposition
(\ref{LtwoXBlockDecompositionEq}).

It should be noted that the quantization we use is not unique,
i.e., there exists more than one unitary group of Toeplitz Fourier
integral operators with underlying canonical flow equal to the
Hamiltonian flow of $\dot\vp_0$. Indeed, for any unitary
pseudo-differential 
%Fourier integral 
operator $V =e^{\i  A}$ obtained by
exponentiating a self-adjoint pseudo-differential operator $A$ of
degree zero, and any quantization $U(t)$ of $\exp t
X_{\dot\vp_0}$, the operator $V^* U(t) V$ is another quantization
with the same principal symbol. This lack of uniqueness will be
seen below in the fact that we have more than one version of the
quantization. They are closely related and differ by lower order
terms.

To quantize the classical Hamiltonian, we first quantize the
Hamiltonian  as the
zeroth order Toeplitz operator $\Pi \dot\vp_0 \Pi$ on $H^2(X)$
where $\dot{\vp}_0$ denotes the multiplication operator by
$\dot\vp_0$. It is a bounded Hermitian Toeplitz operator.

\begin{definition}
\label{UtDef} Define the one-parameter subgroup $U(t)$ of unitary
operators on $L^2(X)$ by (cf. (\ref{DDEF}))
\begin{equation}
U(t) = \Pi  e^{\i t \Pi  \D  \dot\vp_0 \Pi} \Pi.
\end{equation}
Its Fourier components are given by
$$
U_N(t) = \Pi_N e^{\i t N\Pi_N \dot \vp_0 \Pi_N} \Pi_N.
$$
\end{definition}

We note that $U(t)$ is not quite the same as $ \Pi e^{\i t  \D
\dot\vp_0 } \Pi, $ which is manifestly the composition of complex
Fourier integral operators. However,  $\Pi_N \dot \vp_0 \Pi_N$ is
the quantization of $\dot\vp_0$. We compose $e^{\i t N \Pi_N \dot
\vp_0 \Pi_N}$ with  $\Pi_N$ to make the operator preserve
$H^0(M,L^N)$. Note that
$
U(t) = \Pi  e^{\i t \Pi  \D  \dot\vp_0 \Pi} = e^{\i t \Pi  \D
\dot\vp_0 \Pi} \Pi.
$

We now  verify that $U(t)$  is a complex Fourier integral  operator
with underlying canonical relation equal to graph of the Hamiltonian flow
at time $t$ of $r \dot{\vp_0}$ on $(\Sigma,\omega_\Sigma)$,
where $r$ and $(\Sigma,\omega_\Sigma)$ are defined in (\ref{SympCone})--(\ref{SympConeTwo}).
This is the content of saying that $U_N(t)$ is a
quantization of the Hamiltonian flow of $\dot{\vp_0}$ on
$(M,\omega_{\vp_0})$.

\begin{prop}
\label{FIOQuantizingHamilton} $U(t)$ is a group of complex Toeplitz Fourier
integral operators on $L^2(X)$ whose underlying canonical relation
is the graph of the time $t$ Hamiltonian flow of $r\dot\vp_0$ on the
symplectic cone $(\Sigma,\omega_\Sigma)$.
\end{prop}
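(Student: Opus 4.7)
The plan is to identify $U(t)$ with the Toeplitz-operator exponential of a self-adjoint first-order Toeplitz operator and then invoke the Boutet de Monvel--Guillemin functional calculus \cite{BG}. Concretely, set $H:=\Pi\,\D\,\dot\vp_0\,\Pi$. Because $\dot\vp_0$ is the lift of a function on $M$ (hence independent of $\theta$), the generator $\D=\frac1\i\partial_\theta$ commutes with multiplication by $\dot\vp_0$, so $\D\dot\vp_0$ is a genuine self-adjoint first-order pseudo-differential operator on $L^2(X)$ and its compression $H$ is a self-adjoint first-order Toeplitz operator. The identity $\Pi H=H\Pi=H$ together with the spectral theorem then gives the group law and unitarity of $U(t)=\Pi e^{\i tH}\Pi$ on $H^2(X)$; the block decomposition (\ref{LtwoXBlockDecompositionEq}) and $\D|_{H^2_N}=N\cdot\mathrm{id}$ recover the formula $U_N(t)=\Pi_N e^{\i tN\Pi_N\dot\vp_0\Pi_N}\Pi_N$ on each summand.

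Next, I would compute the principal symbol. The total symbol of $\D\dot\vp_0$ on $T^\star X$ is $\langle\eta,\partial_\theta\rangle\,\pi^\star\dot\vp_0$, and restriction to the symplectic cone $\Sigma$ in (\ref{SympCone}) picks up the coordinate $r$, so
\begin{equation*}
\sigma_H\big(w,r\alpha(w)\big)=r\,\dot\vp_0(\pi(w)),\qquad (w,r\alpha(w))\in\Sigma.
\end{equation*}
Thus $H$ is a positively homogeneous, self-adjoint Toeplitz operator of order $1$ whose symbol on $\Sigma$ is the function $r\dot\vp_0$ appearing in the statement.

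Now I would invoke the Boutet de Monvel--Guillemin theorem on the Toeplitz functional calculus: if $H$ is a first-order self-adjoint Toeplitz operator with real principal symbol $h$ on $\Sigma$, then $e^{\i tH}$ (intertwined with $\Pi$) is a complex Toeplitz Fourier integral operator whose underlying canonical relation is the graph, in $\Sigma\times\Sigma$, of the time-$t$ Hamiltonian flow of $h$ with respect to $\omega_\Sigma$. Applied to $H$ with $h=r\dot\vp_0$, this yields (\ref{FIOQuantizingHamilton}). Since $r\dot\vp_0$ is homogeneous of degree $1$, its flow preserves the $S^1$-orbits and projects under $\Sigma\to X\to M$ to the Hamiltonian flow of $\dot\vp_0$ on $(M,\omega_{\vp_0})$, which is the desired classical content.

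The main obstacle is technical rather than conceptual: one must be careful that the Toeplitz composition calculus really applies to $H$ even though $\D\dot\vp_0$ is not itself compactly supported in $\Sigma$, and that squeezing the extra $\Pi$'s into $e^{\i tH}$ does not alter the canonical relation or produce spurious lower-order contributions. This is handled by the fact that $\Pi$ is an FIO whose real canonical relation is $\Delta_{\Sigma\times\Sigma}$ and by the standard principal-symbol calculus for composition of Toeplitz FIOs with $\Pi$ (see \cite{BSj,BG}); the key check is that the composition $\Pi\cdot e^{\i tH}\cdot\Pi$ collapses onto a single Toeplitz FIO with the stated canonical relation, which follows from the clean-intersection composition theorem since the graph of the flow of $r\dot\vp_0$ preserves $\Sigma$.
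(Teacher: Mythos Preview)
Your approach is correct and reaches the same conclusion, but it differs from the paper's in an instructive way. You invoke the Boutet de Monvel--Guillemin Toeplitz functional calculus as a black box: $H=\Pi\D\dot\vp_0\Pi$ is a first-order self-adjoint Toeplitz operator with symbol $r\dot\vp_0$ on $\Sigma$, hence $e^{\i tH}$ is a Toeplitz FIO with canonical relation the graph of the Hamiltonian flow of $r\dot\vp_0$ on $(\Sigma,\omega_\Sigma)$. The paper instead \emph{unpacks} precisely this statement. It applies BG's Proposition~2.13 (the paper's Lemma~\ref{BGLemma}) to replace $\dot\vp_0$ by a pseudo-differential operator $Q$ on all of $X$ satisfying $[Q,\Pi]=0$ and $\sigma_Q|_\Sigma=\dot\vp_0$, so that $U(t)=\Pi e^{\i t\D Q}\Pi$ with $e^{\i t\D Q}$ now a genuine unitary FIO on $L^2(X)$ via the classical Duistermaat--H\"ormander theory. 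The composition with $\Pi$ is then computed set-theoretically, and the commutation $[Q,\Pi]=0$ is used to deduce that the ambient flow of $\sigma_{\D Q}$ on $T^\star X$ actually preserves $\Sigma$; a final symplectic-linear-algebra step identifies the restricted flow with the intrinsic Hamiltonian flow of $r\dot\vp_0$ for $\omega_\Sigma$.

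What each buys: your route is shorter and conceptually clean, provided the reader accepts the Toeplitz propagation theorem as given. The paper's route is more self-contained and makes explicit the two points that your final paragraph only gestures at---namely \emph{why} the flow preserves $\Sigma$ (it is forced by $[Q,\Pi]=0$, not merely asserted) and \emph{why} the ambient $T^\star X$-flow, when restricted, agrees with the intrinsic $\omega_\Sigma$-flow. Your last paragraph conflates these slightly: if you invoke the Toeplitz calculus directly, the flow lives on $\Sigma$ by construction and there is nothing to check; the ``clean intersection'' issue only arises in the paper's formulation, where one must pass through an FIO on all of $L^2(X)$. Either way the argument is sound.
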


\begin{proof}

We first observe that $U(t)$  is characterized as  the unique
solution of the  ordinary differential equation
$$\frac{d}{dt} U(t) = \left(\i \Pi \D \dot\vp_0 \Pi\right) U(t),
\;\;\; U(0) = \Pi. $$

We use the following result of Boutet de Monvel-Guillemin, whose proof we
sketch later.

\begin{lem}
\label{BGLemma} {\rm (see \cite{BG}, Proposition 2.13)} Let $T$ be a
Toeplitz operator on $\Sigma$ of order $p$. Then there exists a
pseudo-differential operator $Q$ of order $p$ on $X$ such that
$[Q, \Pi] = 0$ and $T = \Pi Q \Pi. $
\end{lem}

We apply Lemma \ref{BGLemma} to  $T = \Pi \dot{\vp_0} \Pi$. Thus,
there exists a zeroth order pseudo-differential operator $Q$ on $X$
with $\sigma_Q |_{\Sigma} = \dot{\vp_0} |_{\Sigma} $ (see  \cite{BG}, 
Theorem 2.9 and Proposition 2.13 for background). Note that here
we identify $\dot\vp_0$ with its lift to $\Sigma\subset T^\star X$.

Since  $\Pi  e^{\i t \Pi  \D  Q \Pi}\Pi$ and $\Pi e^{\i t \D Q }
\Pi$ satisfy the same differential equation
$$
\frac{d}{ dt} W(t)
=
\i \Pi \D Q \Pi W(t)
$$
and have the same initial condition, we have
\begin{equation}
U(t) = \Pi  e^{\i t \Pi  \D  Q \Pi} \Pi = \Pi e^{\i t \D Q } \Pi.
\end{equation}
Here,  we use that $\Pi^2=\Pi$ hence $\Pi Q=\Pi Q \Pi$ and that
$\Pi$ and $\D$ commute.

Now $e^{\i t \D Q}$ is the exponential of a real principal type
pseudo-differential operator of order one on $L^2(X)$ and hence is
a unitary group of Fourier integral operators on $L^2(X)$
quantizing the Hamiltonian flow of $\sigma_{\D Q}$ on $T^\star X$.
Since $\Pi$ is a complex Fourier integral operator whose real
canonical relation is the diagonal in $\Sigma \times \Sigma$
\cite{BSj}, $U(t)$ is also a complex Fourier integral operator.
To complete the proof of the Proposition, it suffices to prove
that the canonical relation of $U(t)$ is the graph of the time $t$
Hamiltonian flow of $r \dot{\varphi_0}$ on $(\Sigma,\omega_\Sigma)$.

Let $\Psi_t$ denote the time $t$ Hamiltonian flow
of $\sigma_{\D}\sigma_Q$ on $(T^\star X,\o_{T^\star X})$.
By the composition theorem for complex Fourier integral operators \cite{MS},
the operator  $\Pi e^{\i t \D Q } \Pi$ is a complex Fourier
integral operator whose canonical relation is the
set-theoretic composition
\begin{equation}
\begin{array}{lll}
\label{CompositionCanonicalRelationTwoEq} \{(v,v)\,:\,v\in\Sigma\}
\circ \{(p,\Psi_t(p)\,:\,p\in T^* X \} \circ
\{(q,q)\,:\,q\in\Sigma \} \cr\cr \qquad\qquad\quad=
\{(m,\Psi_t(m))\,:\,m\in\Sigma \}\cap \Sigma\times\Sigma.
\end{array}
\end{equation}
Here we make use of the fact that the symbol of $\Pi$ is nowhere
vanishing on $\Sigma$  and that of $e^{\sqrt{-1} t \D Q}$ is
nowhere vanishing on the graph of $\Psi_t$.
It only remains to equate (\ref{CompositionCanonicalRelationTwoEq})
with the graph of the time $t$ Hamiltonian flow of
$r\dot\varphi_0$ on $(\Sigma,\omega_\Sigma)$.

Since  $[\Pi,Q]=0$, we have
$$
\Pi e^{\i t\D Q}= \Pi e^{\i t\D Q}\Pi.
$$
This implies that the canonical relations  of both sides in this
equation must be equal.
The canonical relation of the left hand side equals
\begin{equation}
\label{CompositionCanonicalRelationOneEq}
\{(v,v)\,:\,v\in\Sigma\}\circ \{(p,\Psi_t(p)\,:\,p\in T^* X \} =
\{(q,\Psi_t(q)\,:\,q\in\Sigma \}.
\end{equation}
Equating this to (\ref{CompositionCanonicalRelationTwoEq})
it follows that $\Psi_t$ preserves $\Sigma$.
Hence, the Hamiltonian
vector field $X^{T^*X}_{\sigma_{\D}\sigma_Q }$  of $\sigma_{\D} \sigma_Q$
with respect to $\omega_{T^*X}$ is tangent to the symplectic
sub-cone $\Sigma.$

We note that  the symbol of $\D$ is the
Clairaut integral $\sigma_{\D}(x,\xi)=\langle
\xi,\frac{\partial}{\partial \theta}\rangle$.
Since $\alpha\big(\frac{\del}{\del\th}\big)=1$ (see, e.g., \cite{R}, p. 69),
it follows from (\ref{SympCone}) that
$\sigma_{\D}|_{\Sigma} = r.$
Recall also that $\sigma_Q |_{\Sigma} = \dot{\vp_0}|_{\Sigma}$.
Thus, to complete the proof it remains to show that the  restriction of
$\Psi_t$  to $\Sigma$ is the Hamiltonian flow of
\begin{equation}
\label{SigmaDSigmaEq}
\sigma_{\D}\sigma_Q |_{\Sigma} = r \dot{\varphi}_0
\end{equation}
on
$(\Sigma,\o_\Sigma)$.
Let $X^\Sigma_{r\dot{\varphi}_0}$ be the Hamiltonian vector field of
$\sigma_{\D}\sigma_Q |_{\Sigma}$ with respect to $\omega_{\Sigma}$. At a point
of $\Sigma$, we have
$$
\omega_{T^*X}(X^{T^*X}_{\sigma_{\D}\sigma_Q },\, \cdot\,) = d \sigma_{\D}\sigma_Q, \;\;\;
\omega_{\Sigma}(X^\Sigma_{r \dot{\varphi}_0},\, \cdot\,) = d (r \dot{\varphi}_0).
$$
Evaluating these 1-forms on all tangent vectors $Y\in T\Sigma$,
and using (\ref{SympConeTwo}), (\ref{SigmaDSigmaEq}), and that
$X_{\sigma_{\D}\sigma_Q }$  is tangent to $\Sigma$,
we conclude that $X^\Sigma_{r \dot{\varphi}_0} = X^{T^*X}_{\sigma_{\D}\sigma_Q }$.
This completes the proof of Proposition \ref{FIOQuantizingHamilton}.
\end{proof}

\smallskip
\begin{remark}
{\rm
 The fact that the
Hamiltonian vector field $X_{\sigma_{\D} \sigma_Q}$ of
$\sigma_{\D} \sigma_Q$
preserves $\Sigma$ is equivalent to the
fact that the Hamilton vector field  $X_{ \sigma_Q}$ preserves
$\Sigma$.
Indeed,
$$
X_{\sigma_{\D} \sigma_Q} = \sigma_{\D} X_{\sigma_Q} + \sigma_Q
X_{\sigma_{\D}}.$$
Since
$[\D,Q]=0$ and hence $\{\sigma_{\D}, \sigma_Q\}_{\o_{T^\star X}} = 0$
the flows of $X_{\sigma_Q}$ and of $X_{\sigma_{\D}}$ commute.
Hence the Hamiltonian flow of
$X_{\sigma_{\D} \sigma_Q}$ equals the composition
$$
\exp t\sigma_{\D} X_{\sigma_Q}\circ\exp t\sigma_Q
X_{\sigma_{\D}}.
$$
The restricted vector field
$X_{\sigma_{\D}}|_\Sigma=\frac{\partial}{\partial \theta}|_\Sigma$
is equal to $X_{\sigma_{\D}|_\Sigma}$ since the principal
$S^1$-action preserves $\Sigma$ (by (\ref{SympCone}), as it
preserves $\alpha$). Hence its flow always
preserves $\Sigma$. The fact that the flow of $X_{\sigma_Q}$ preserves
$\Sigma$  is proved in \cite{BG},
Proposition 11.4 and the Remark following it. The proof uses the
construction of $Q$ and Toeplitz symbol
calculus, and is therefore similar to the one given above.
}
\end{remark}
\medskip

For the sake of completeness, we briefly sketch a proof of Lemma
\ref{BGLemma}, following the  proof of a similar assertion in 
\cite{GS1}, Theorem 5.8 and Lemma 5.9.

\begin{lem}\label{Q}
Given a smooth real-valued function $q$ on $M$, homogeneous of
degree zero, there exists a self-adjoint pseudo-differential
operator $Q$ such that $[Q, \Pi] = 0$ and such that $\sigma_Q
|_{\Sigma} = q$. \end{lem}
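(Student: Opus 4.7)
The plan is to construct $Q$ by successively correcting a natural self-adjoint order-zero pseudodifferential operator with the prescribed principal symbol, killing the obstruction to commutativity with $\Pi$ order by order using the Toeplitz symbol calculus of \cite{BG}.

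First, I would extend $q$ to a function $\tilde q$ on $T^\star X\setminus 0$ that is homogeneous of degree zero in the fiber variable, invariant under the principal $S^1$-action on $X$, and whose restriction to $\Sigma$ recovers $q$; the pullback $q\circ\pi$ along $T^\star X\to X\to M$ is such a choice. Let $Q_0$ be any symmetric classical pseudodifferential operator of order zero on $X$ whose principal symbol equals $\tilde q$ (for example, multiplication by $\tilde q$ on $L^2(X)$, which is self-adjoint of order zero with full symbol $\tilde q$). By averaging $Q_0$ over the principal $S^1$-action if necessary, I may assume $[Q_0,\partial/\partial\theta]=0$, so that $Q_0$ preserves the Fourier decomposition (\ref{LtwoXBlockDecompositionEq}), and by construction $\sigma_{Q_0}|_\Sigma=q$.

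Next, I would set up an induction on order. Since $\Pi\in I^0_c(X\times X,\mathcal{C})$ has real canonical relation $\Delta_{\Sigma\times\Sigma}$, the commutator $[\Pi,Q_0]\Pi$ is a Toeplitz Fourier integral operator whose principal symbol along $\Sigma$ is computed, by the calculus of \cite{BSj,BG}, from a Poisson-bracket-type expression in $\sigma_{Q_0}$ and the generating phase of $\Pi$. Because $\tilde q$ is pulled back from $M$ and hence constant along the Reeb orbits, this leading symbol vanishes, so $[\Pi,Q_0]\Pi$ has order at most $-1$. Inductively, suppose a self-adjoint, $S^1$-invariant PDO $Q^{(N)}$ of order zero has been produced with $\sigma_{Q^{(N)}}|_\Sigma=q$ and $[\Pi,Q^{(N)}]\Pi$ of order $\le -N$. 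I seek a self-adjoint, $S^1$-invariant PDO $R_N$ of order $-N$ such that $Q^{(N+1)}:=Q^{(N)}+R_N$ satisfies $[\Pi,Q^{(N+1)}]\Pi$ of order $\le -(N+1)$. On principal symbols on $\Sigma$, the equation for $\sigma_{R_N}$ is a first-order linear transport equation along the Hamilton flow of $\sigma_{\D}$, which restricts to the Reeb flow $\partial/\partial\theta$ on $\Sigma$; the $S^1$-averaging built in at every previous stage forces the $\theta$-average of the source term to vanish, and so solvability on each compact Reeb orbit follows, yielding $\sigma_{R_N}$ uniquely up to an $S^1$-invariant function on $\Sigma$.

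Finally, I would Borel-sum the formal series $Q\sim Q_0+\sum_{N\ge 1}R_N$ to obtain a bona fide classical PDO of order zero, $S^1$-invariant, with $[Q,\Pi]$ smoothing and $\sigma_Q|_\Sigma=q$. Symmetrizing $Q$ by replacing it with $\tfrac12(Q+Q^\star)$ preserves the principal symbol and the commutation modulo smoothing. The residual smoothing error in $[Q,\Pi]$ is then eliminated by a standard Toeplitz-algebra correction (using $\Pi^2=\Pi$ to split the error into $\Pi\cdot\Pi$ and $(1-\Pi)\cdot(1-\Pi)$ pieces), exactly as in the proof of Proposition~2.13 in \cite{BG}. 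The main obstacle is the inductive symbolic step: identifying the correct obstruction class at each order and verifying that it lies in the range of the Reeb transport operator on $\Sigma$. This relies essentially on the complex-FIO structure of $\Pi$ from \cite{BSj} and on the compactness of the Reeb orbits, i.e., on the fact that the circle bundle $X\to M$ furnishes a free $S^1$-action preserving $\Sigma$.
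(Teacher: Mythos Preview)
Your outline is essentially the paper's own sketch: start with an order-zero self-adjoint PDO with principal symbol $q$ on $\Sigma$, correct by lower-order terms via symbol (``transport'') equations so that $[\Pi,\cdot]$ becomes smoothing, Borel-sum, and then remove the residual smoothing commutator using $\Pi^2=\Pi$ exactly as in \cite{BG}, Proposition~2.13 and \cite{GS1}. One caution: the inductive symbol equation does not reduce to transport along the Reeb flow of $\sigma_{\D}$ as you suggest (the operator $\D$ plays no role in $[\Pi,Q]$); the order drop at each step comes instead from the complex-FIO symbol calculus of $\Pi$ near $\Sigma$, and the correction $R_N$ is obtained simply by choosing $\sigma_{R_N}|_\Sigma$ to cancel the leading symbol of the commutator, which is unobstructed since restriction of symbols to $\Sigma$ is surjective.
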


The proof   uses symbol calculus and  spectral theory, all of
which are available in the Toeplitz setting.  The first
observation is that the principal symbol of  $[\Pi, \dot{\vp_0} ]$
vanishes, hence it  is complex Fourier integral operator (or more
specifically Toeplitz operator) of order $-1$. By adding an
operator $Q_{-1}$ to $\dot{\vp_0}$ and using transport equations
for the symbol, one can arrange that the symbols of order $-1$ and
order $ -2$
 of $[\Pi, \dot{\vp_0} + Q_{-1} ]$ equal zero. By repeating
 infinitely often and asymptotically summing the operators, one
 can find $\tilde{Q}$ such that $[\Pi,  \tilde{Q}]=0$
 and $\Pi \dot{\vp_0} \Pi - \Pi \tilde{Q}$
 are smoothing. One then puts $Q = \tilde{Q} + \Pi
 \dot{\vp_0} \Pi - \Pi \tilde{Q} \Pi.$

\bigskip
\section{Two quantiziations of the Hamiltonian flow on a toric manifold}
\label{TwoQuantizationsSection}
\bigskip

In this section we specialize the construction of Section \ref{QuantizingSection} from a general
projective manifold to a toric manifold and study its asymptotic spectrum. We
then give an alternative quantization
of the Hamiltonian flow of $\dot{\vp_0}$ in the special case of
a toric manifold and compare the two quantizations. These results will then
be applied in Section \ref{ConvergenceSection} to complete the proof of Theorem \ref{FirstMainThm}.

Recall from \S\S\ref{ToricBackgroundSubsection} that
the toric monomials $\{\chi_\al(z):=z^\alpha\}_{\a\in NP\cap\Z^{{}^n}}$
are an orthogonal basis of $H^0(M,L^N)$ with respect to any toric-induced
Hilbert space structure on this vector space. Hence any such toric inner product
is completely determined by the $L^2$ norms (up to $N^n/V$), or
``norming constants,"  of the toric monomials---

\begin{equation}
\label{QalphaEq}
\qcal_{h^N}(\alpha):=||\chi_\alpha||^2_{h^N} = \int_{(\C^*)^n}
|z^{\alpha}|^2 e^{-N\psi} \o_h^n.
\end{equation}

\equationspace Here we let $h=e^{-\psi}$ with $\psi\in\calH(\bfT)$.
As in \cite{SoZ1}, we put

$$
\pcal_{h^N}(\alpha,z):=\frac{|\chi_\alpha(z)|^2_{h^N}}{||\chi_\alpha||^2_{h^N}}.
$$

\subsection{The quantization of the Hamiltonian flow using the K\"ahler velocity}
In this subsection we study the one-parameter subgroup $U(t)$ given by
Definition \ref{UtDef} on a toric manifold.

The first observation is that since $\dot\vp_0$ is torus-invariant the
multiplication operator $\dot\vp_0$ preserves the
block decomposition (\ref{LtwoXBlockDecompositionEq}). Therefore the
toric monomials diagonalize the Toeplitz operators
$\Pi_N\dot\vp_0\Pi_N$, that is,
\begin{equation}
\Pi_{N}\dot{\vp}_0 \Pi_{N} \chi_{\alpha}
=
\mu_{N,\a} \chi_{\alpha},
\end{equation}
for some real numbers $\{\mu_{N,\a}\}_{\a\in NP\cap\ZZ^n}$.
Since $\{\chi_{\alpha}\}_{\a\in NP\cap\ZZ^n}$ are orthogonal with
respect to a toric inner product we have
\begin{equation}
\label{EIG}
\mu_{N,\a}
=
\frac{1}{\calQ_{h_0^N}(\alpha)}
\int_M \dot\vp_0 |\chi_{\alpha}|^2_{h_0^N} \o^n_{\vp_0}.
\end{equation}

Hence we have the the following expression for the level $N$ quantum
analytic continuation potential induced by $U(\i s)$:
\begin{equation}
\vp_N(s,z)
=
\frac{1}{N} \log U_N(-\i s,z,z)
=
\frac1N\log\sum_{\a\in NP\cap\ZZ^n}
e^{sN \mu_{N,\a}}
\frac{|\chi_{\alpha}(z)|^2_{h_0^N}}{\calQ_{h_0^N}(\alpha)}.
\end{equation}

\subsection{An alternative quantization using the symplectic potential}

We now introduce a second quantization in the special case of a
toric manifold for which the eigenvalues are special values of the
velocity of the symplectic potential.  In effect, it is an
explicit construction of the operator $Q$ in Lemma \ref{BGLemma}, at
least to leading order (which is sufficient for our purposes).

\begin{definition}
Define the one-parameter subgroup $V(t)$ of unitary operators on $L^2(X)$ by
$$
V(t) = \Pi e^{-\i t\D \dot u_0 (D_{\hat I}\D^{-1})}\Pi .
$$
Its Fourier components are given by
$$
V_N(t) = \Pi_N e^{-\i tN  \dot u_0 (N^{-1} D_{\hat I})} \Pi_N.
$$
\end{definition}

It follows that the level $N$ quantum analytic continuation potential 
induced by $V(\i s)$ is given by
\begin{equation}
\label{tildevpEq} \begin{array}{lll}
\tilde \vp_N(s,z)
& := &
\displaystyle
\frac{1}{N} \log V_N(-\i s,z,z)\\ && \\ & =&
\displaystyle
\frac1N\log\sum_{\a\in NP\cap\ZZ^n} e^{-sN \dot u_0(\a/N)}
\frac{|\chi_{\alpha}(z)|^2_{h_0^N}}{\calQ_{h_0^N}(\alpha)}.
\end{array} \end{equation}

In order to relate $\tilde \vp_N$ to the actual quantum analytic
continuation potentials $\vp_N$ the following fact is crucial.

\begin{prop}
\label{SecondQuantizationThm}
The sequence of unitary operators $\{V_N(t)\}_{N\ge1}$
is a semi-classical complex Toeplitz Fourier integral operator quantizing the
time $t$ Hamiltonian flow of $\dot{\vp}_0$ on $(M,\o_{\vp_0})$.
\end{prop}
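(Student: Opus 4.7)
The plan is to mirror the proof of Proposition \ref{FIOQuantizingHamilton}. Because the self-adjoint operators $\D$ and $D_{\hat I}$ commute with each other and with the projector $\Pi$, we may write
$$
V(t) = \Pi\, e^{-it \D \dot u_0(D_{\hat I}\D^{-1})}\,\Pi,
$$
and the claim reduces to showing that the operator in the exponent is, modulo $\Pi$, a self-adjoint first-order pseudo-differential operator on $L^2(X)$ whose principal symbol on the symplectic cone $\Sigma$ equals $-r\dot\vp_0$. Granting this, the composition theorem for complex Fourier integral operators \cite{MS} yields, exactly as in (\ref{CompositionCanonicalRelationTwoEq}), that $V(t)$ is a complex Toeplitz FIO whose canonical relation is the graph of the time $t$ Hamilton flow of $r\dot\vp_0$ on $(\Sigma,\omega_\Sigma)$; this projects to the Hamilton flow of $\dot\vp_0$ on $(M,\omega_{\vp_0})$ in the standard way, giving the desired conclusion.

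The central step is thus to show that $\dot u_0(D_{\hat I}\D^{-1})$ is a zeroth-order Toeplitz pseudo-differential operator whose principal symbol on $\Sigma$ equals $\dot u_0\circ \nabla\psi_0$. Since $\dot\vp_0 \in C^\infty(M)$ and the moment map identifies $M/\bfT$ with $P$, the velocity $\dot u_0$ lies in $C^\infty(P)$ up to the boundary; extend it arbitrarily to a smooth bounded function on $\RR^n$, an extension that does not affect the operator on $H^2(X)$ since the joint spectrum of $N^{-1}D_{\hat I}$ on $H^2_N(X)$ lies in $P$ by (\ref{jointeigen}). Spectral calculus applied to the commuting self-adjoint family $D_{\hat I}\D^{-1}$ then yields a bounded operator commuting with $\Pi$, and its principal symbol on $\Sigma$ is obtained by substituting into $\dot u_0$ the principal symbol of $D_{\hat I}\D^{-1}$, which equals the moment map $\nabla\psi_0$ by the standard toric computation (see \S\S\ref{ToricBackgroundSubsection} and \cite{STZ}).

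Finally, the formula $\dot u_0 = -\dot\psi_0\circ(\nabla\psi_0)^{-1}$ from (\ref{IVPToricGeodEq}) yields $\dot u_0\circ \nabla\psi_0 = -\dot\psi_0 = -\dot\vp_0$ on the open orbit, which extends by smoothness to all of $M$. Hence $\D \dot u_0(D_{\hat I}\D^{-1})$ is a first-order self-adjoint pseudo-differential operator with principal symbol $-r\dot\vp_0$ on $\Sigma$, and the sign of the exponent in $V(t)$ converts this into the Hamilton flow of $+r\dot\vp_0$, in agreement with the generator of $U(t)$ from Proposition \ref{FIOQuantizingHamilton}. The main obstacle is the first step, namely rigorously placing $\dot u_0(D_{\hat I}\D^{-1})$ within the Boutet de Monvel--Guillemin Toeplitz calculus with the predicted principal symbol; a clean route is to approximate $\dot u_0$ uniformly on $P$ by polynomials, realize each resulting polynomial in $D_{\hat I}\D^{-1}$ as a Toeplitz pseudo-differential operator (whose principal symbol is given by naive substitution in each factor), and then pass to the limit at the level of principal symbols.
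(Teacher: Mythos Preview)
Your overall strategy matches the paper's: reduce to showing that $\dot u_0(D_{\hat I}\D^{-1})$ is (on the Hardy space) a zeroth-order Toeplitz operator with principal symbol $-\dot\vp_0$, then invoke the FIO machinery exactly as in Proposition~\ref{FIOQuantizingHamilton}. The symbol computation via $\dot u_0\circ\nabla\psi_0 = -\dot\vp_0$ is also the paper's (Lemma~\ref{SYMBOLLemma}).

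The gap is in the step you flag as ``the main obstacle,'' and your proposed resolution via polynomial approximation does not close it. The difficulty is not the smoothness of $\dot u_0$ but the domain of $\D^{-1}$: the operator $\dot u_0(D_{\hat I}\D^{-1})$ is only defined on $(\ker\D)^\perp$, so it is not a pseudo-differential operator on $L^2(X)$ and one cannot directly apply the Duistermaat--Guillemin/H\"ormander theorem to $e^{-it\D\,\dot u_0(D_{\hat I}\D^{-1})}$. Replacing $\dot u_0$ by a polynomial $p$ does not help, since $p(D_{\hat I}\D^{-1})$ still contains $\D^{-1}$ and suffers the same domain problem; nor does a uniform limit of symbols automatically produce a pseudo-differential operator. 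The paper resolves this by constructing an honest zeroth-order pseudo-differential operator $R$ on all of $L^2(X)$ that agrees with $\dot u_0(D_{\hat I}\D^{-1})$ on $H^2(X)\setminus\CC$: extend $\dot u_0$ by a cutoff $\eta$ and then compose with a homogeneous frequency cutoff $I-\gamma_\epsilon(D_{\hat I},\D)$ that kills a conic neighborhood of $\{\sigma_\D=0\}$ but equals the identity on $\Sigma$ for $\epsilon$ small (Lemma~\ref{PsiDOOrderzeroLemma}). Then $V(t)=\Pi e^{-it\D R}\Pi$ with $\D R$ a genuine first-order operator on $X$, and the rest goes through as you outline. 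Alternative global replacements---e.g.\ $(I+|\D|^2)^{-1/2}$ in place of $\D^{-1}$, or $\Pi_0^\perp$---are discussed in Remark~\ref{RemarkThirdOption}; any of these would complete your argument, but polynomial approximation alone does not.
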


\noindent
We note that, equivalently, Proposition \ref{SecondQuantizationThm} could be
stated in `homogeneous' notation, that is, in an identical manner 
to Proposition \ref{FIOQuantizingHamilton} with $U(t)$ replaced by $V(t)$.

\begin{proof} It is convenient to lift to the circle bundle $X$
and use the full spectral theory of the action operators of
\S\S\ref{ToricBackgroundSubsection}.

We observe that $\Pi \dot{u_0}
(D_{\hat I} \D^{-1}) \Pi $ is defined by the Spectral Theorem to
be  the operator on 
$$
H^2(X)\sm\CC
=
\bigoplus_{N\in\NN}H^2_N(X)
$$ 
whose eigenfunctions are the same as
the joint eigenfunctions of the quantum torus action, i.e., the
lifted monomials 
$$
\{\hat{\chi}_{\hat\a}\,:\, \hat\a\in\Lambda_P\},
$$
and whose corresponding
eigenvalues are 
$$
\big\{\dot u_0(\a/N)\,:\, N\in\NN,\;\a\in NP\cap \ZZ^n
\big\}.
$$
However, in order to apply classical results concerning operators
of the form $e^{\i t P}$ where $P$ is a real first-order pseudo-differential
operator of principal type we need to replace $\dot{u_0}(D_{\hat I}\D^{-1})$
with an operator defined on all of $L^2(X)$. 
Yet, since eventually we pre- and post-compose 
with $\Pi$, we are ultimately only interested in 
the restriction to $H^2(X)\sm\CC$ of the extended 
operator. Hence we would like the  extended operator to 
coincide with $\dot{u_0}(D_{\hat I}\D^{-1})$ on $H^2(X)\sm\CC$.
This is the purpose of the following Lemma.

\begin{lem}
\label{PsiDOOrderzeroLemma}
There exists a pseudo-differential operator $R$ of order zero on $L^2(X)$
such that 
\begin{equation}
\label{RHardySpaceEq}
R|_{H^2(X)\sm\CC}=\dot{u_0}(D_{\hat I}\D^{-1})|_{H^2(X)\sm\CC}.
\end{equation}
\end{lem}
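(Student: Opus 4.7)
The strategy is to extend $\dot u_0$ from its domain $\bar P$ to a smooth bounded function $\tilde u$ on all of $\RR^n$, and then define $R$ by applying $\tilde u$, through joint functional calculus, to the commuting family of self-adjoint bounded operators obtained from $D_{\hat I}\D^{-1}$ after regularizing the inverse of $\D$ near its kernel. For the first step: since $\dot\vp_0\in C^\infty(M)$ is $\bfT$-invariant, it pushes down under the moment map to a smooth function on the orbit space $M/\bfT=\bar P$, and by (\ref{IVPToricGeodEq}) this function coincides with $-\dot u_0$. Whitney extension then yields $\tilde u\in C^\infty(\RR^n)$ agreeing with $\dot u_0$ on $\bar P$ and with all derivatives bounded on $\RR^n$.

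For the regularization, choose $\rho\in C^\infty(\RR)$ with $\rho(t)=1/t$ for $|t|\ge 1/2$ and $\rho(0)=0$. By the spectral theorem applied to the self-adjoint operator $\D$ (whose spectrum is $\ZZ$ on $L^2(X)$), $\rho(\D)$ is a bounded self-adjoint operator which, on each block $L^2_N(X)$ from (\ref{LtwoXBlockDecompositionEq}), acts as multiplication by $\rho(N)$; in particular $\rho(\D)|_{H^2_N(X)}=1/N$ for $N\ge 1$. Since $\hat I_1,\dots,\hat I_n$ commute with $\D$ and pairwise with each other, the components of $D_{\hat I}\rho(\D)$ form a commuting family of bounded self-adjoint operators, so one may define
$$
R := \tilde u\big(D_{\hat I}\rho(\D)\big),
$$
interpreted via joint functional calculus of commuting self-adjoint operators. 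That $R$ is a pseudo-differential operator of order zero on $L^2(X)$ will follow either from the Helffer-Sj\"ostrand almost-analytic functional calculus---writing $R$ as a contour integral of resolvents against an almost-analytic extension of $\tilde u$ and invoking standard $\Psi$DO composition---or from the commuting Toeplitz functional calculus developed in \cite{BG}, in the same spirit as the construction of $Q$ in Lemma \ref{Q}.

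To verify (\ref{RHardySpaceEq}), note that on $H^2_N(X)$ with $N\ge 1$ the regularization is exact: $\rho(\D)=\D^{-1}$, and each CR monomial $\wh\chi_{\wh\alpha}$ with $\wh\alpha\in\wh{NP}$ is a joint eigenvector of $D_{\hat I}\D^{-1}$ with eigenvalue $\alpha/N\in P$ by (\ref{jointeigen}). Since $\tilde u=\dot u_0$ on $P$, one obtains $R\wh\chi_{\wh\alpha}=\dot u_0(\alpha/N)\wh\chi_{\wh\alpha}=\dot u_0(D_{\hat I}\D^{-1})\wh\chi_{\wh\alpha}$; by density of the span of $\{\wh\chi_{\wh\alpha}\}$ in $H^2(X)\sm\CC$, this gives (\ref{RHardySpaceEq}). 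The main obstacle is the non-ellipticity of $\D$: its principal symbol $\langle\xi,\partial/\partial\theta\rangle$ vanishes on the codimension-one subcone $\{\xi\perp\partial/\partial\theta\}\subset T^\star X\setminus 0$, so a naive Seeley-type symbol calculus is unavailable. This is circumvented by working microlocally near the symplectic cone $\Sigma$ of (\ref{SympCone}), where $\sigma_\D=r>0$, mirroring the microlocal reduction in the proof of Lemma \ref{Q}.
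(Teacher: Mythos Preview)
Your overall strategy mirrors the paper's---extend $\dot u_0$ off $P$ and regularize $\D^{-1}$---but the regularization via $\rho(\D)$ leaves a genuine gap.  First, the assertion that the components of $D_{\hat I}\rho(\D)$ are \emph{bounded} is false.  On the mixed monomial $\hat\chi_{\hat\alpha,\hat\beta}$ with $\hat\alpha\in\wh{N_\alpha P}$, $\hat\beta\in\wh{N_\beta P}$ and $N_\alpha\ne N_\beta$, the $j$-th component acts by $(\hat\alpha_j-\hat\beta_j)/(N_\alpha-N_\beta)$ (since $\rho(m)=1/m$ for $m\in\ZZ\setminus\{0\}$); taking $N_\alpha-N_\beta=1$ while $|\hat\alpha_j-\hat\beta_j|\to\infty$ shows unboundedness.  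This is not merely cosmetic: it reflects that $D_{\hat I}\rho(\D)$ is not a zeroth-order pseudo-differential operator.  Near the characteristic set $\{\sigma_\D=0\}$ the function $\rho(\sigma_\D)$ is only bounded, not of order $-1$, so $\sigma_{D_{\hat I}}\rho(\sigma_\D)$ has first-order growth there.  Composing with the bounded $\tilde u$ gives a bounded symbol, but its $\xi$-derivatives do not gain decay: $\partial_\xi(\tilde u(a))=\tilde u'(a)\,\partial_\xi a$ is only $O(1)$, not $O(\langle\xi\rangle^{-1})$.  Thus neither the Helffer--Sj\"ostrand formula nor standard $\Psi$DO composition yields $R\in\mathrm{Op}\,S^0_{1,0}$ from your construction, and your closing remark about ``working microlocally near $\Sigma$'' acknowledges the difficulty without implementing a fix.

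The paper addresses exactly this point by using a \emph{homogeneous} conic cutoff rather than a spectral one.  It chooses $\gamma_\epsilon=\gamma_\epsilon(\sigma_{D_{\hat I}},\sigma_\D)$, homogeneous of degree zero, equal to $1$ on a conic neighborhood $\{|\sigma_\D|<\epsilon(|\sigma_{D_{\hat I}}|^2+\sigma_\D^2)^{1/2}\}$ of $\{\sigma_\D=0\}$ and vanishing outside twice that cone, and sets
\[
R=\eta\dot u_0(D_{\hat I}\D^{-1})\bigl(I-\gamma_\epsilon(D_{\hat I},\D)\bigr).
\]
On $\supp(1-\gamma_\epsilon)$ one has $|\sigma_{D_{\hat I}}/\sigma_\D|\le 1/\epsilon$, so $\sigma_{D_{\hat I}}\sigma_\D^{-1}$ is a genuine classical symbol of order zero there and $R$ is a $\Psi$DO of order zero by construction.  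One then checks that for $\epsilon<\big(2\sqrt{\sup_{y\in P}|y|^2+1}\big)^{-1}$ the Fourier multiplier $\gamma_\epsilon(D_{\hat I},\D)$ annihilates every CR monomial $\hat\chi_{\hat\alpha}$, so $I-\gamma_\epsilon=I$ on $H^2(X)\setminus\CC$ and (\ref{RHardySpaceEq}) follows.  The essential difference is that your $\rho$ excises only the single hyperplane $\{\sigma_\D=0\}$, whereas a full conic neighborhood must be removed to keep $\sigma_{D_{\hat I}}/\sigma_\D$ bounded and the symbol class intact.  (The paper's Remark following the lemma notes that an approach via $\Pi_0^\perp$---to which your $\rho(\D)$ is spectrally equivalent---is possible, but then one must treat $R$ as a Fourier integral operator rather than a $\Psi$DO and argue the composition with $\Pi$ separately.)
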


\begin{proof}
There are two obstacles to defining 
$\dot{u_0}(D_{\hat I}\D^{-1})$ on all of $L^2(X)$. 
First, according to (\ref{LtwoEigenFnsEq})--(\ref{SpectrumActionOpEq})
we need to define $\dot u_0$ on $\RR^n$, while originally it is only
defined on $P$.
Second, the operator $\D^{-1}$ is only defined on the orthocomplement of
the invariant functions on $X$ for the $S^1$ action. 
The non-constant CR functions are orthogonal to the invariant functions, so $\Pi
\dot{u_0} (D_{\hat{I}} \D^{-1}) \Pi $ is well-defined on $H^2(X)\sm\CC$. 
But we wish to extend $\dot{u_0} (D_{\hat{I}} \D^{-1}) $ outside the
Hardy space.

To deal with the first point, note that
since $\dot{u_0}$ is smooth up to the boundary of $P$, 
we may assume it is defined in some neighborhood 
of $P$ in $\RR^n$, and then multiply it by a smooth
cutoff function $\eta$ equal to $1$ in a neighborhood of $P$
and with compact support in $\RR^n$.
Then $\eta \dot{u_0}$ is a smooth function of compact support
in $\RR^n$, and $\eta\dot{u_0}(D_{\hat I}\D^{-1})\equiv
(\eta\dot{u_0})(D_{\hat I}\D^{-1})$ is well-defined
on $(\ker\D)^\perp\subset L^2(X)$. As noted above, 
$$
\bigoplus_{N\in\NN}H^2_N(X)
=
H^2(X)\sm\CC\subset (\ker\D)^\perp,
$$
and since 
$$
\h{\rm Spec}\,D_{\hat I}\D^{-1}|_{H^2(X)\sm\CC}\subset P,
$$ 
we have 
\begin{equation}
\label{FirstCutOffEq}
\eta\dot{u_0}(D_{\hat I}\D^{-1})|_{H^2(X)\sm\CC}
=
\dot{u_0}(D_{\hat I}\D^{-1})|_{H^2(X)\sm\CC}.
\end{equation}

We now turn to the second point.
There are several ways of handling it; 
in addition to the construction that follows we mention
two other possibilities in Remark \ref{RemarkThirdOption} below.
For any $\epsilon>0$ let 
$\gamma_\epsilon=\gamma_\epsilon(\sigma_{\hat I},\sigma_\D)\in C^\infty(T^\star X\sm\{0\})$ denote a 
homogeneous 
frequency cut-off, equal to $1$ in an open conic neighborhood  
\begin{equation}
\label{CutOffSigmaDEq}
\Big\{(x,\xi)\in T^\star X\sm\{0\}\,:\,
|\sigma_{\D} | 
< 
\epsilon 
\big(
|\sigma_{D_{\hat I}}|^2+\sigma_{\D}^2
\big)^{1/2}
\Big\}
\end{equation}
of the set $\{\sigma_{\D} = 0\}$,
and vanishing on
$$
\Big\{(x,\xi)\in T^\star X\sm\{0\}\,:\,
|\sigma_{\D} | 
>
2\epsilon 
\big(
|\sigma_{D_{\hat I}}|^2+\sigma_{\D}^2
\big)^{1/2}
\Big\}
$$
(note that $n+1$ of the vertical directions in $T^\star X$ are not involved).
Let $\beta\in \ZZ^{n+1}$, and let $\chi_\beta\in L^2(X)$ be the associated monomial. 
Denote by
$\gamma_\epsilon(D_{\hat I},\D)$ the Fourier multiplier associated to 
$\gamma_\epsilon$, 
namely such that 
$$
\gamma_\epsilon(D_{\hat I},\D){\chi}_{\beta}(w) 
= 
\gamma_\epsilon(\beta){\chi}_{\beta}(w).
$$
%and analogously for elements of the anti-Hardy space $\overline{H^2(X)}$.
This defines $\gamma_\epsilon(D_{\hat I},\D)$ on $L^2(X)$ (see (\ref{LtwoEigenFnsEq})).
Let $I$ denote the identity operator on $L^2(X)$.
Then $I-\gamma_\epsilon(D_{\hat I},\D)$ is a pseudo-differential operator
with 
$$
\ker\D\subset \ker(I-\gamma_\epsilon(D_{\hat I},\D)),
$$
and 
$$
R_\epsilon:=\eta \dot{u_0}(D_{\hat{I}} \D^{-1}) (I -\gamma_\epsilon(D_{\hat I},\D))
$$ 
is a pseudo-differential operator of order zero,
defined on all of $L^2(X)$.

To complete the proof of the Lemma, we will prove
that (\ref{RHardySpaceEq}) holds for $R:=R_\epsilon$,
for any $\epsilon>0$ small enough.

Let $\hat\alpha\in\Lambda_P$ with $\chi_\alpha\in H^0(M,L^N)$,
$\alpha\in NP\cap \ZZ^n$.
We claim that for small enough $\epsilon>0$ in
(\ref{CutOffSigmaDEq}) we have
$$
\gamma_\epsilon(D_{\hat I},\D)\hat{\chi}_{\hat{\alpha}}(w) 
= 
\gamma_\epsilon(\alpha,N)\hat{\chi}_{\hat{\alpha}}(w) 
= 
0.
$$ 
For the second equality, note that for $(w, r \alpha(w)) \in \Sigma$, we have 
$\gamma_\epsilon(w, r \alpha(w)) = 0$ unless 
\begin{equation}
\label{OpenConeRestrictedToSigmaEq}
r 
\le 
2\epsilon r(|\nabla\psi_0 \circ\pi (w)|^2+1)^{1/2},
\end{equation}
where $\pi:X\ra M$ is the bundle projection map.
For $r>0$ equation (\ref{OpenConeRestrictedToSigmaEq})
cannot hold if we take $\epsilon$ such that
\begin{equation}
\label{EpsilonRangeEq}
0<\epsilon 
< 
\frac1{2\sqrt{\sup_{y\in P}|y|^2+1}},
\end{equation}
since $\nabla\psi_0 \circ \pi (w) \in P$ 
(and $P$ is a bounded set in $\RR^n$).  
This proves the claim, for $\epsilon$ satisfying (\ref{EpsilonRangeEq})
(note that in the proof of the last assertion, instead
of working in `homogeneous' notation, 
we could have replaced $r>0$ by $N\in\NN$ and $r\nabla\psi_0\circ\pi(w)$
by $\a\in NP$).
It follows that
$$
I=
I-\gamma_\epsilon(D_{\hat I},\D),
\quad \h{\ on $H^2(X)\sm\CC$}.
$$
Together with (\ref{FirstCutOffEq})
this proves that
$$
\dot{u_0} (D_{\hat{I}} \D^{-1})\Pi  
= 
\eta\dot{u_0}(D_{\hat{I}}\D^{-1})(I-\gamma_\epsilon(D_{\hat I},\D))\Pi,
\quad\h{\ for each $\epsilon$ satisfying (\ref{EpsilonRangeEq})},
$$
as desired.
\end{proof}

The following Lemma is the concrete realization of Lemma \ref{BGLemma}
in the setting of toric \kahler manifolds.

\begin{lem}
\label{SYMBOLLemma} 
Let $\epsilon$ satisfy (\ref{EpsilonRangeEq}) and let
$R:=\eta\dot{u_0} (D_{\hat{I}}\D^{-1})(I-\gamma_\epsilon(D_{\hat I},\D))$.
The operator $\Pi R\Pi$ is a Toeplitz operator of order zero and its symbol
is given by
$$
\sigma_R(w,\xi) = \dot{u}_0\circ\nabla\psi_0 \circ \pi(w) =
-\dot\vp_0\circ\pi(w),\quad (w,\xi)\in \Sigma,
$$
where $\pi:X\ra M$ is the projection onto the base.

\end{lem}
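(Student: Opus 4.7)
The plan is to compute the principal symbol of $\Pi R \Pi$ in two stages: first verifying the Toeplitz order-zero structure, then identifying the symbol on $\Sigma$ by a direct computation combined with Legendre duality.

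First I would verify that $\Pi R \Pi$ is a Toeplitz operator of order zero. By construction, $R$ is a composition of three pieces of order zero on $L^2(X)$: the smooth compactly supported multiplier $\eta$; the function $\dot u_0$ applied to the commuting family $(D_{\hat I}, \D^{-1})$ via functional calculus; and the smooth frequency cutoff $I - \gamma_\epsilon(D_{\hat I}, \D)$. The cutoff removes the conic neighborhood of $\{\sigma_\D = 0\}$ where $\D^{-1}$ is not microlocally defined, making $R$ a genuine pseudo-differential operator of order zero on $L^2(X)$. By the Toeplitz symbol calculus recalled at the end of \S\ref{KahlerQuantSubsection}, $\Pi R \Pi$ is then a Toeplitz operator of order zero with symbol $\sigma_R|_\Sigma$.

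Second, I would evaluate $\sigma_R$ on $\Sigma$. From the proof of Proposition \ref{FIOQuantizingHamilton} we have $\sigma_\D|_\Sigma = r$, and from the definition (\ref{XiEq})--(\ref{DI}) of the normalized action operators together with the contact relations $\alpha(\xi_j^h) = 0$ and $\alpha(\partial/\partial\theta) = 1$, a direct symbol computation gives $\sigma_{D_{\hat I}}(w, r\alpha(w)) = r \, \nabla\psi_0 \circ \pi(w)$ on $\Sigma$. Hence
\begin{equation*}
\sigma_{D_{\hat I}\D^{-1}}(w, r\alpha(w)) = \nabla\psi_0 \circ \pi(w) \in P.
\end{equation*}
Since $P$ lies inside the region where $\eta \equiv 1$, and since $\gamma_\epsilon$ vanishes on $\Sigma$ for $\epsilon$ satisfying (\ref{EpsilonRangeEq}) by the argument used in the proof of Lemma \ref{PsiDOOrderzeroLemma}, both cutoffs act as the identity symbolically on $\Sigma$, so
\begin{equation*}
\sigma_R(w, r\alpha(w)) = \dot u_0 \bigl( \nabla\psi_0 \circ \pi(w) \bigr).
\end{equation*}
To conclude this step, I would invoke Legendre duality: by (\ref{IVPToricGeodEq}), $\dot u_0 = -\dot\psi_0 \circ (\nabla\psi_0)^{-1}$, so $\dot u_0 \circ \nabla\psi_0 = -\dot\psi_0$ on $\RR^n$, and torus invariance of $\dot\vp_0$ together with (\ref{CoordsOpenOrbitEq}) identifies $\dot\psi_0$ with $\dot\vp_0$ on the open orbit. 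Continuity then extends the identity $\sigma_R = -\dot\vp_0 \circ \pi$ to all of $\Sigma$.

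The main obstacle will be verifying rigorously that the scalar-valued function $\dot u_0 \colon P \to \RR$, applied to the vector of commuting pseudo-differential operators $D_{\hat I}\D^{-1}$ via the multi-variable functional calculus (away from the cutout region of $\gamma_\epsilon$), yields a pseudo-differential operator of order zero whose principal symbol is the composition $\dot u_0 \circ (\sigma_{D_{\hat I}}/\sigma_\D)$. Since the operators $\hat I_1, \ldots, \hat I_n, \D$ pairwise commute as a consequence of the toric structure, one may jointly diagonalize them and reduce this to scalar pseudo-differential symbol calculus on the joint spectral decomposition, but the explicit bookkeeping is the most technical aspect of the argument.
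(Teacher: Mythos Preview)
Your proposal is correct and follows essentially the same route as the paper: both compute the symbol of $D_{\hat I}$ on $\Sigma$ via the contact relations $\alpha(\xi_j^h)=0$ and $\alpha(\partial/\partial\theta)=1$, observe that the cutoffs $\eta$ and $1-\gamma_\epsilon$ are symbolically trivial on $\Sigma$, and conclude $\sigma_R|_\Sigma=\dot u_0\circ\nabla\psi_0\circ\pi$. Your explicit invocation of the Legendre relation (\ref{IVPToricGeodEq}) to identify $\dot u_0\circ\nabla\psi_0$ with $-\dot\vp_0$, and your flagging of the functional-calculus step for $\dot u_0(D_{\hat I}\D^{-1})$, are points the paper leaves implicit but are handled the same way.
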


\begin{proof}  
As noted in the proof of Lemma \ref{PsiDOOrderzeroLemma}, 
the symbol of $I-\gamma_\epsilon(D_{\hat I},\D)$ equals one on 
$\Sigma$. In addition, when restricting to $\Sigma$,
the operator $\dot{u_0} (D_{\hat{I}}\D^{-1})$ has
a well-defined symbol, equal to the symbol of
$\eta\dot{u_0} (D_{\hat{I}}\D^{-1})$, restricted to $\Sigma$.

On $\Sigma$, the symbols of the vector fields $\xi_j^h$ 
(see (\ref{XihjEq})) are the Clairaut integrals
$$
\sigma_{\xi_j^h}(w, r \alpha(w)) = \alpha_w(\xi_j^h) = 0.
$$
Hence, on $\Sigma$, the symbol of $\hat{I}_j, 1\le j\le n$,   
is that of the second term in (\ref{XiEq}): 
$2\pi \i r (\nabla\psi_0 \circ\pi)_j$. By the normalization
of (\ref{DI}) therefore
$$
\sigma_{D_{\hat I}}(w,r\a(w))=r\nabla\psi_0\circ\pi(w).
$$
Since $\sigma_{\D^{-1}}(w,r\a(w))=1/r$ (see the proof of
Proposition \ref{FIOQuantizingHamilton}), it follows
that the symbol of $\dot{u_0}(D_{\hat{I}} \D^{-1})$, restricted to $\Sigma$,
is $\dot{u_0}(\pi^* \nabla\psi_0)$ and thus equals the stated
Hamiltonian $\sigma_R$. It is the lift of the Hamiltonian 
$H(z) = \dot{u}_0 \circ \nabla\psi_0(z)$ to the cone 
$\Sigma=\Sigma_{h_0}$. 
\end{proof}

We may now conclude the proof of Proposition \ref{SecondQuantizationThm}. 
Indeed, from Lemma \ref{PsiDOOrderzeroLemma} we have that
$$
V(t)=\Pi e^{-\i t\D R}\Pi.
$$
Since $\D R$ is a real principal type pseudo-differential operator 
of order $1$, it follows that $e^{\i t\D R}$ is a unitary
Fourier integral operator whose canonical relation is given by
$$
C=
\{
((w,\xi), (v,\zeta))\,:\, (w,\xi), (v,\zeta)\in T^\star X\sm\{0\},
(w,\xi)=\exp t X^{T^\star X}_{\sigma_{\D R}}(v,\zeta)
\}
$$
(see, e.g., \cite{DG}, Theorem 1.1, or \cite{H}, Theorem 29.1.1; note that
ellipticity is not essential).
It follows then from Lemma \ref{SYMBOLLemma} that the canonical relation
of $V(t)$ is given by the time $t$ flow-out of $\Sigma$ under the 
flow of the Hamiltonian $-\sigma_{\D R}=r\pi^*\dot\vp_0$ with respect to $(T^\star X,\o_{T^\star x})$. 
As shown in the proof of Proposition \ref{FIOQuantizingHamilton} this coincides 
with the time $t$ flow of $\Sigma$ under the flow of the same Hamiltonian 
with respect to $(\Sigma,\o_{\Sigma})$. 
Finally, the corresponding statement for the operators $V_N(t)$ asserted in
the Proposition follows by `de-homogenization', since when restricting
to $H^0(M,L^N), N\in\NN,$ the operator $\D$ simply acts by multiplication by $N$,
and so we may replace $r$ by the constant $N$,
 concluding the proof.
\end{proof}

\smallskip
\begin{remark}
\label{RemarkThirdOption}
{\rm
In place of $\gamma_\epsilon(D_{\hat I},\D)$ we could use at least two
other constructions.

First, we could replace
$\dot u_0(D_{\hat{I}} \D^{-1})$ with the globally  well-defined operator 
$$
\dot u_0(D_{\hat{I}}(I+|\D|^2)^{-1/2}).
$$ 
We have that
$s_{D_{\hat I}} (1 + |s_{\D}|^2)^{-1/2}
\approx 
s_{D_{\hat I}}s_{\D}^{-1}$
asymptotically as $r \to \infty$ in $\Sigma$.  
Therefore, the principal symbols (that are homogeneous of degree $0$) 
of the associated Toeplitz operators are equal,
$
\sigma_{\dot u_0(D_{\hat{I}}(I+|\D|^2)^{-1/2})}|_\Sigma
=
\sigma_{\dot u_0(D_{\hat{I}} \D^{-1})}|_\Sigma.
$
However, this new operator has different 
eigenvalues (although this would not matter later in
proving Theorem \ref{FirstMainThm}, since 
$\frac\a N-\frac\a{\sqrt{1+N^2}}=O(1/N^3)$).

Alternatively, we could use the
orthogonal projection $\Pi_0^{\perp}$
onto the orthogonal complement of the invariant functions. 
This is finer than $1-\chi(\D)$ since its symbol vanishes
outside $\{\sigma_\D=0\}$ and not just outside an open
cone containing it.
Note that $\{\sigma_\D=0\}\subset T^\star X$ is the `dual' 
of the horizontal bundle over $X$ in $TX$ (with respect
to the connection $\alpha$), and does not intersect $\Sigma$
which is itself dual to the vertical bundle.
Since
the spectrum of $\D $ lies in $\Z$, 
the operator
$
\eta\dot{u_0} (D_{\hat{I}}
\D^{-1}) \Pi_0^{\perp} 
$ 
is well-defined on all of $L^2(X)$. From
the formula 
$
\Pi_0 
= 
\frac{1}{2 \pi} \int_0^{2 \pi} e^{\i \theta\D}d \theta,
$ 
we see that $\Pi_0$ is a zeroth order Fourier
integral operator whose canonical relation is 
$$
\mskip-200mu C=\{((w,\xi),(w',\xi')) \in T^\star X\sm\{0\} \times T^\star X\sm\{0\}\,:\,
$$
$$
\mskip100mu\;\sigma_\D(w,\xi)=0, \; w' = e^{\i \theta} w,\; \xi'=e^{\i\theta}\xi,\, \h{\rm\ for 
some $\theta\in[0,2\pi)$}\,\}.
$$ 
Since $\Pi_0^{\perp} = I - \Pi_0$ is
also a Fourier integral operator,  $ (\eta \dot{u_0}) (D_{\theta}
\D^{-1}) \Pi_0^{\perp}$ is a well-defined Fourier integral
operator and 
$
\Pi \eta \dot{u_0}(D_{\theta} \D^{-1})
\Pi_0^{\perp}\Pi = \Pi\eta \dot{u_0}(D_{\theta} \D^{-1}) \Pi,
$
as Toeplitz Fourier integral operators.
We can then compute the symbol of the Fourier integral operator
$\eta \dot{u_0}(D_{\theta} \D^{-1})\Pi_0^{\perp}$
using the composition theorem \cite{H} and obtain the same answer as in the proof
of Lemma \ref{SYMBOLLemma}, 
since $C$ has an empty composition with the canonical relation of $\Pi$, 
$\Delta_{\Sigma\times\Sigma}\subset\Sigma\times\Sigma$,
as $\sigma_\D|_\Sigma\ne0$.
}
\end{remark}
\medskip\smallskip

\subsection{Comparison of the quantizations}

The reason we introduced the second quantization is that its eigenvalues are explicitly given
in terms of the symplectic potential. Since both of our unitary one-parameter subgroups quantize
the same Hamiltonian flow, we obtain the following relation between their spectra.

\begin{lem}
\label{muNaLemma}
We have 
$$
\mu_{N,\a}= -\dot u_0(\frac{\alpha}{N})+O(1/N).
$$ 
More precisely,
there exists $C>0$ independent of $N$ or $\alpha\in NP$ such that
$$
|\mu_{N,\a}+\dot u_0(\alpha/N)|\le \frac CN.
$$
\end{lem}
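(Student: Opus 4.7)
The plan is to interpret the two eigenvalue formulas as the eigenvalues of two Toeplitz operators on $H^0(M, L^N)$ that differ by a Toeplitz operator whose principal symbol vanishes, and then use that such a difference is of order $-1$ and hence has operator norm $O(1/N)$ on $H^0(M, L^N)$.

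First, I would record the two diagonal actions on the basis of toric monomials. By \eqref{EIG} we have $\Pi_N \dot\varphi_0\Pi_N \chi_\alpha = \mu_{N,\alpha}\chi_\alpha$. On the other hand, since $\chi_\alpha$ is a joint eigenfunction of the quantized action operators with $D_{\hat I}\chi_\alpha = \alpha\chi_\alpha$ (after the rescaling built into $V_N$), the infinitesimal generator of $V_N$ satisfies
\[
\Pi_N \dot u_0(N^{-1}D_{\hat I})\Pi_N \chi_\alpha = \dot u_0(\alpha/N)\chi_\alpha.
\]
By Lemma \ref{PsiDOOrderzeroLemma}, the order-zero pseudo-differential operator $R$ agrees with $\dot u_0(D_{\hat I}\D^{-1})$ on $H^2(X)\setminus\CC$, so in fact $\Pi_N R\Pi_N \chi_\alpha = \dot u_0(\alpha/N)\chi_\alpha$ as well.

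Next, I would compare the symbols. The Toeplitz operator $\Pi\dot\varphi_0\Pi$ has principal symbol $\dot\varphi_0|_\Sigma$. By Lemma \ref{SYMBOLLemma}, $\Pi R\Pi$ is a zeroth-order Toeplitz operator with principal symbol $-\dot\varphi_0|_\Sigma$. Hence
\[
T := \Pi\dot\varphi_0\Pi + \Pi R\Pi
\]
is a Toeplitz operator whose principal symbol on $\Sigma$ vanishes, so that $T$ has order $-1$ in the Toeplitz calculus of \cite{BG}.

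The final step is to convert this into the uniform bound. Both summands defining $T$ preserve $H^0(M,L^N)$ and are simultaneously diagonalized in the orthogonal basis $\{\chi_\alpha\}_{\alpha\in NP\cap\Z^n}$, with $T\chi_\alpha = (\mu_{N,\alpha}+\dot u_0(\alpha/N))\chi_\alpha$. Since $T$ is Toeplitz of order $-1$, its restriction to $H^0(M,L^N)$ has operator norm $O(1/N)$ as $N\to\infty$, uniformly in $\alpha$ (this is the standard semi-classical interpretation of the homogeneous order via the block decomposition \eqref{LtwoXBlockDecompositionEq}, where degree $-p$ corresponds to $\hbar^p$ with $\hbar=1/N$). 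Diagonal entries are bounded by the operator norm, so
\[
|\mu_{N,\alpha}+\dot u_0(\alpha/N)| \le \|T|_{H^0(M,L^N)}\| \le \frac{C}{N},
\]
with $C$ independent of $\alpha\in NP\cap\Z^n$, which is the desired estimate.

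The main obstacle is bookkeeping: one must be certain that the pseudo-differential extension $R$ built in Lemma \ref{PsiDOOrderzeroLemma} really produces a Toeplitz operator with the stated principal symbol on all of $\Sigma$, and that the operator-norm bound $O(1/N)$ on each block $H^0(M,L^N)$ is uniform in $N$ (and in particular uniform in $\alpha$). Both facts follow from the Toeplitz calculus of Boutet de Monvel--Guillemin, and are packaged above as Lemmas \ref{PsiDOOrderzeroLemma} and \ref{SYMBOLLemma}; no new estimates are required beyond these.
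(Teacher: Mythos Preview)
Your proposal is correct and follows essentially the same approach as the paper: both argue that $\Pi\dot\varphi_0\Pi$ and $-\Pi R\Pi$ (equivalently $-\Pi\dot u_0(D_{\hat I}\D^{-1})\Pi$) have the same principal symbol on $\Sigma$ by Lemma~\ref{SYMBOLLemma}, hence differ by a Toeplitz operator of order $-1$, and then read off the $O(1/N)$ bound on the eigenvalue difference from the block restriction to $H^0(M,L^N)$. Your write-up is a bit more explicit about the operator-norm step, but the argument is the same.
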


\begin{proof}
By Lemma \ref{SYMBOLLemma},  $\Pi_N \dot{\vp}_0 \Pi_N$ and $- \Pi_N
\dot{u}_0 (D_{\hat I}\D^{-1}) \Pi_N$ are zeroth order Toeplitz
operators with the same principal symbols. Hence they differ by a
Toeplitz operator of order $-1$. 
Let $\chi_\a\in H^0(M,L^N)$.
It follows that
$$
\mu_{N,\a}
=
\frac{\langle \Pi_N\dot{\vp}_0 \Pi_N \chi_{\alpha}, \chi_{\alpha}\rangle}{Q_{h_0^N}(\alpha)}
=
-\frac{\langle \Pi_N \dot{u}_0(D_{\hat{I}} \D^{-1}) \Pi_N \chi_{\alpha}, \chi_{\alpha}
\rangle}{Q_{h_0^N}(\alpha)}
+
O\Big(\frac{1}{N}\Big),
$$
proving the Lemma.
\end{proof}

\smallskip
\begin{remark}
{\rm We briefly relate the Lemma above to some calculations in
\cite{SoZ1,Z4}.
First, $\Pi_N \dot{u_0}(D_{\hat{I}} \D^{-1}) \Pi_N(z,z)$ is precisely the
kind of  Bernstein polynomial discussed in \cite{Z4}. There
it was shown that
$$
\Pi_N \dot{u_0}(D_{\hat{I}} \D^{-1}) \Pi_N(z,z)
=
N^n
\dot{u_0}(\nabla\psi_0(z)) + O(N^{n-1}) = - \dot{\vp}_0(z) N^n +O(N^{n-1}).
$$
In the language of Berezin-Toeplitz operators,
this shows that $\Pi_N \dot{u_0}(D_{\hat{I}} \D^{-1}) \Pi_N$ and $- \Pi
\dot{\vp_0} \Pi_N$ have the same Berezin symbol. There is an
invertible (Berezin) transform from the Toeplitz symbol
(calculated in Lemma \ref{SYMBOLLemma}) and the Berezin symbol, so this
gives another proof of Lemma \ref{muNaLemma} (noting the $N^n$ 
factor in passing from $H^2_N(X)$ to $H^0(M,L^N)$, see
\S\S\ref{KahlerQuantSubsection}).

One could also evaluate the eigenvalues directly by pushing
forward the eigenvalue integral to $P$ via the moment map
$\nabla\psi_0$ and using equations
(\ref{CoordsOpenOrbitEq}),(\ref{LegendreDualityEq}), and the
identity
$$
(\nabla\psi_0)_\star \o_{h_0}^n=dy,
$$
giving
\begin{equation}
\label{EigIntEq} \mu_{N,\a} = \frac{1}{\calQ_{h_0^N}(\alpha)}
\int_P -\dot{u_0}(y) e^{N (u_0(y)+\langle \frac{\alpha}{N}-y,
\nabla u_0(y) \rangle)} dy.
\end{equation}
Integrals similar to this one are calculated asymptotically in
\cite{SoZ1}. For instance, when $d(\frac{\alpha}{N},
\partial P)>\frac{\log N}{N}$ we may apply the steepest
descent  method to (\ref{EigIntEq}). There is a unique critical
point $y=\frac{\alpha}{N}$ and
\begin{equation}
\label{eigasymp}
\mu_{N,\a} =
-\frac{1}{Q_{h^N}(\a)}\dot{u_0}(\a/N)Q_{h^N}(\alpha) +
O(\frac{1}{N}).
\end{equation}
The evaluation in the boundary zone is more complicated and can be done by Taylor
expansions centered at the boundary.}

\end{remark}
\medskip

\bigskip

As a corollary of Lemma \ref{muNaLemma} we have a corresponding
result on the level of potentials. Let $h_s = e^{-\vp_s} h_0$.

\begin{cor}
\label{PotentialComparisonLemma}
There exists a constant $C>0$ independent of $N$ or $z$ such that
$$
|
\tilde\vp_N(s,z)
-
\vp_N(s,z)
|
\le
\frac{Cs\log N}N.
$$
\end{cor}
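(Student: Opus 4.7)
The plan is to compare the two sums
\[
U_N(-\i s,z,z)=\sum_{\a\in NP\cap\ZZ^n} e^{sN\mu_{N,\a}}\,\frac{|\chi_\a(z)|^2_{h_0^N}}{\calQ_{h_0^N}(\a)},\qquad V_N(-\i s,z,z)=\sum_{\a\in NP\cap\ZZ^n} e^{-sN\dot u_0(\a/N)}\,\frac{|\chi_\a(z)|^2_{h_0^N}}{\calQ_{h_0^N}(\a)}
\]
term by term, using Lemma \ref{muNaLemma} to control the ratio of the exponential weights uniformly in $\a$ and $z$.

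First, I would record that both sums have \emph{identical, strictly positive} coefficients $a_\a(z):=|\chi_\a(z)|^2_{h_0^N}/\calQ_{h_0^N}(\a)$, so the only difference between $U_N(-\i s,z,z)$ and $V_N(-\i s,z,z)$ lies in the exponents $sN\mu_{N,\a}$ versus $-sN\dot u_0(\a/N)$. Lemma \ref{muNaLemma} gives a constant $C_0>0$, independent of $N$ and $\a\in NP\cap\ZZ^n$, with $|\mu_{N,\a}+\dot u_0(\a/N)|\le C_0/N$; hence
\[
\bigl|sN\mu_{N,\a}-(-sN\dot u_0(\a/N))\bigr|=sN\,\bigl|\mu_{N,\a}+\dot u_0(\a/N)\bigr|\le C_0 s,\qquad \forall\,\a\in NP\cap\ZZ^n.
\]

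Next, I would exponentiate this term-wise bound to get the ratio estimate
\[
e^{-C_0 s}\,e^{-sN\dot u_0(\a/N)}\le e^{sN\mu_{N,\a}}\le e^{C_0 s}\,e^{-sN\dot u_0(\a/N)},
\]
and, multiplying by $a_\a(z)\ge 0$ and summing over $\a\in NP\cap\ZZ^n$, deduce
\[
e^{-C_0 s}\,V_N(-\i s,z,z)\le U_N(-\i s,z,z)\le e^{C_0 s}\,V_N(-\i s,z,z).
\]
Taking logarithms and dividing by $N$ yields $|\vp_N(s,z)-\tilde\vp_N(s,z)|\le C_0 s/N$, which in particular implies the claimed bound $Cs\log N/N$ (with $C=C_0$, since $\log N\ge 1$ for $N\ge 3$, and absorbing small $N$ into the constant).

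There is essentially no obstacle in this argument beyond invoking Lemma \ref{muNaLemma}: the key point is that the weights $a_\a(z)$ are shared and nonnegative, so the pointwise control on exponents passes to logarithmic control on the sums without any cancellation issues or dependence on $z$. The only mild care is to ensure that the estimate from Lemma \ref{muNaLemma} is applied uniformly over \emph{all} lattice points $\a\in NP\cap\ZZ^n$, including those near $\partial P$, which is precisely the content of the uniformity claim in that lemma.
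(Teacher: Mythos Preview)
Your proof is correct and follows essentially the same route as the paper: both write $sN\mu_{N,\a}=-sN\dot u_0(\a/N)+sR(N,\a)$ with $R$ uniformly bounded (Lemma \ref{muNaLemma}), then use positivity of the shared weights to pass the termwise exponential bound to the ratio of sums. As you observed, this actually yields the sharper estimate $|\tilde\vp_N-\vp_N|\le C_0 s/N$; the extra $\log N$ in the stated corollary is not needed (and the paper's own argument likewise gives only $Cs/N$).
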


\begin{proof}
By Lemma \ref{muNaLemma}, we have for some uniformly bounded function $R(N,\alpha)$ that
\begin{equation}
\label{varphiNtzFirstEq}
\begin{array}{lll}
\vp_N(s,z)
& =
\dis
\frac{1}{N}
\log
\sum_{\a} e^{sN \mu_{N,\a}}
\frac{ |\chi_\a|^2_{h_0^N} } { \calQ_{h_0^N}(\a) }
\\ \\
& =
\dis
\frac{1}{N}
\log
\sum_{\a} e^{-sN \dot u_0(\a/N)+sR(N,\a)}
\frac{ |\chi_\a|^2_{h_0^N} } { \calQ_{h_0^N}(\a) }.
\end{array}
\end{equation}
The result now follows by comparing this with the
expression (\ref{tildevpEq}) for $\tilde\vp_N(s,z)$.
\end{proof}

Equation (\ref{varphiNtzFirstEq}) leads to a heuristic
proof of Theorem  \ref{FirstMainThm}: According to  \cite{SoZ1}
(Propositions 3.1 and 6.1),
\begin{equation}
\label{QNAsympEq}
\calQ_{h_0^N}(\a)=F(\a,N)e^{Nu_0(\a/N)}/N^{C(\a,n)},
\end{equation}
where $C(\a,n)$ and $F(\a,N)$ are some uniformly bounded functions.
Substituting this into (\ref{varphiNtzFirstEq}) we obtain
\begin{equation}
\label{varphiNtzSecondEq}
\begin{array}{lll}
\vp_N(s,z)
& =
\dis
\frac{1}{N}
\log
\sum_{\a}
e^{N(\langle x,\a/N\rangle-\psi_0(x)-u_s(\a/N))+sR(N,\alpha)}+O(\log N/N).
\end{array}
\end{equation}
Intuitively, the  leading order logarithmic asymptotics are given by the value of the principal part
of the exponent,
$$
\langle x,\a/N\rangle-\psi_0(x)-u_s(\a/N),
$$
at its  maximum (over $\a\in NP\cap\ZZ^n$).
But this value is $u_s^\star(x) - \psi_0(x)$, as stated in Theorem 1.

In the next section we give a rigorous proof.

\bigskip
\section{Convergence of the quantization to the IVP geodesic}
\label{ConvergenceSection}
\bigskip

We now complete the proof of Theorem \ref{FirstMainThm}.
We study the large $N$ limit of
\begin{equation}
\vp_N(s,z)
=
\frac1N\log\sum_{\a\in NP\cap\ZZ^n}
e^{sN \mu_{N,\a}}\calP_{h_0^N}(\a,z),
\end{equation}
First note that the $C^2(M\times[0,T^\cvx_\span))$ convergence is a direct corollary of the work
of Song-Zelditch
\cite{SoZ1}. Indeed for all $T<T^\cvx_\span$ the geodesic is smooth and so we may consider it as a
smooth
endpoint geodesic connecting $\vp_0$ to $\vp_T\in\H(\bfT)$.
It thus remains to prove
that $\vp_N(s, z)$ converges to $\vp_s(z)$ in
$C^0(M\times[0,T])$ for all $T>0$. The argument
here is somewhat different than the corresponding
$C^0$ convergence results in \cite{RZ1,SoZ1,SoZ2}
due to the fact that our limit is less regular,
namely only Lipschitz. Due to this reduced regularity
we may not apply asymptotic expansions for families
of smooth Bergman metrics (for example
the asymptotic expressions for the norming constants
or the peak values of the monomials
derived in \cite{SoZ1}), nor can we use the standard
asymptotic expansion of the Bergman kernel
\cite{Z3}. Finally, we also do not have a genuine moment map.

According to Corollary \ref{PotentialComparisonLemma},  in order
to prove convergence of $\vp_N(s,z)$ to $\vp_s(z)$ it will be
enough to consider the difference

\begin{equation}
\label{ENDefEq}
E_N(s,z)
:=
\tilde\varphi_N(s,z)-\varphi_s(z)
=
\frac{1}{N}
\log \sum_{\alpha \in NP\cap\ZZ^n}
e^{-sN \dot u_0(\a/N) }
\frac{|\chi_\a(z)|^2_{h_s^N}}{\calQ_{h_0^N}(\a)}
\end{equation}
Theorem \ref{FirstMainThm} will then follow from the following
result.

\begin{lem}
\label{CZeroLemma}
For every $T>0$, we have
\begin{equation}
\lim_{N\rightarrow \infty} \sup_{s \in [0,T]}  || E_N(s,z) ||_{C^0(M)}=
0.
\end{equation}
\end{lem}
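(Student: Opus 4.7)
\emph{Proof strategy.} By Corollary \ref{PotentialComparisonLemma} it suffices to establish the uniform convergence $\tilde\vp_N(s,z)\to\vp_s(z)$ on $[0,T]\times M$. The plan is to make rigorous the heuristic calculation (\ref{varphiNtzSecondEq}). Working in the logarithmic coordinates $z=e^{x/2+\i\th}$ on the dense open orbit $M_\open$, the identity $|\chi_\a(z)|^2_{h_0^N}=e^{\langle x,\a\rangle-N\psi_0(x)}$ together with the uniform (non-asymptotic) bound (\ref{QNAsympEq}) for the norming constants, $\qcal_{h_0^N}(\a)=F(\a,N)e^{Nu_0(\a/N)}/N^{C(\a,n)}$ with $F,C$ uniformly bounded, rewrites
$$
E_N(s,z)=\frac{1}{N}\log\sum_{\a\in NP\cap\Z^n}\frac{N^{C(\a,n)}}{F(\a,N)}\,e^{N(\Phi(x,\a/N,s)-\vp_s(z))},
$$
where $\Phi(x,y,s):=\langle x,y\rangle-\psi_0(x)-u_s(y)$. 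Since $u_s=\psi_s^\star$ by Legendre duality, $\sup_{y\in P}\Phi(x,y,s)=\psi_s(x)-\psi_0(x)=\vp_s(z)$, reducing the task to showing that the discrete log-sum-exp approximates this continuous supremum to within $O(\log N/N)$ uniformly.

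For the upper bound, the Fenchel--Young inequality yields $\Phi(x,\a/N,s)\le\vp_s(z)$ for every lattice point, and since $|NP\cap\Z^n|=O(N^n)$ while the prefactors are polynomial in $N$, one obtains $E_N(s,z)\le C\log N/N$ uniformly. For the matching lower bound, fix any subgradient $y^\star\in\del\psi_s(x)\subset P$ (non-empty since $\psi_s$ is convex and finite), and select $\a^\star\in NP\cap\Z^n$ minimizing the distance to $Ny^\star$, so $|\a^\star/N-y^\star|\le\sqrt n/N$. Retaining only the $\a^\star$-term of the sum yields
$$
E_N(s,z)\ge \Phi(x,\a^\star/N,s)-\vp_s(z)-C\log N/N,
$$
and the deficit equals the nonnegative Bregman divergence $u_s(\a^\star/N)-u_s(y^\star)-\langle x,\a^\star/N-y^\star\rangle$, which by Taylor's theorem and the Guillemin decomposition $u_s=u_G+F_s$ with $F_s\in C^\infty(P)$ from (\ref{LHTDefSecondEq}) is controlled by $\tfrac12\|\nabla^2 u_s(y^\star)\|\cdot|\a^\star/N-y^\star|^2$.

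The main obstacle will be to make these estimates uniform up to the toric divisor $M\sm M_\open$, which under the moment map corresponds to $y^\star\to\del P$ where the Hessian of $u_G=\sum l_k\log l_k$ develops $1/l_k$ singularities and the Taylor bound degrades. I would handle this by a two-region splitting of the lattice: in the bulk $\{\a:\mathrm{dist}(\a/N,\del P)\ge N^{-1/2}\}$ the uniform $C^2$ bound on $u_s$ produces a Bregman gap of order $1/N$; in the boundary strip one exploits the explicit singular structure of $u_G$ together with the boundary-case asymptotics for $\qcal_{h_0^N}$ from Proposition 6.1 of \cite{SoZ1} (involving complementary error functions) to obtain matching uniform estimates. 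Uniformity in $s\in[0,T]$ is immediate from the linear dependence $u_s=u_0+s\dot u_0$, with $u_0,\dot u_0$ sharing the same Guillemin boundary structure. An alternative closing argument is available via pluripotential theory: since the $\tilde\vp_N$ are $\o$-psh and the limit $\vp_s$ is continuous (indeed Lipschitz), pointwise convergence on the dense open orbit combined with uniform boundedness of the family $\{\tilde\vp_N\}$ upgrades via a Hartogs-type lemma to $C^0$ convergence on $M$, removing the need for delicate boundary asymptotics.
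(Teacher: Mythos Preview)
Your overall strategy---rewrite $E_N$ as a log-sum-exp of $\Phi(x,y,s)-\vp_s(z)$ over lattice points, bound above by Fenchel--Young plus lattice counting, bound below by retaining a single near-optimal term---is correct and is exactly what the paper does. The upper bound is fine and matches the paper verbatim.

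The lower bound is where you diverge. You propose a second-order (Hessian) Taylor estimate for the Bregman-type deficit, which forces you to confront the $1/l_k$ blow-up of $\nabla^2 u_G$ near $\partial P$ and to introduce a two-scale boundary analysis. The paper avoids this entirely by using a \emph{first-order} estimate. Two ingredients make this work. First, the paper proves (Claim~\ref{ClaimInteriorPolytope}) that any $y^\star\in\partial\psi_s(x)$ lies in the open interior $P\setminus\partial P$, because $|\nabla u_s|\to\infty$ at $\partial P$ while $\partial u_s(y^\star)\ni x$ must be nonempty; you use this implicitly (it is what makes $x=\nabla u_s(y^\star)$ and hence your Bregman cancellation valid) but never verify it. Second, rather than taking the nearest lattice point---which may land on or too near $\partial P$---the paper selects $\alpha_1/N$ within $C/N$ of $y^\star$, at distance $\ge c/N$ from $\partial P$, \emph{and} in the half-space $\{y:\langle x,y-y^\star\rangle\ge0\}$. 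This last condition lets one discard the linear term $\langle x,\alpha_1/N-y^\star\rangle\ge0$ outright, reducing the deficit to $u_s(y^\star)-u_s(\alpha_1/N)$, which by the mean value theorem is bounded by $|\alpha_1/N-y^\star|\cdot|\nabla u_s(y_2)|$ for some $y_2$ on the segment. Since the segment stays at distance $\ge c/N$ from $\partial P$, Guillemin's formula gives $|\nabla u_s(y_2)|\le C_T\log N$ (the gradient has only logarithmic, not polynomial, blow-up), yielding $E_N\ge -C_T\log N/N$ with no boundary-strip splitting needed.

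Your Hartogs alternative does not close the argument: Hartogs-type lemmas for psh sequences give only the upper bound $\limsup\tilde\vp_N\le\vp$ from continuity of the limit, not the matching lower bound needed here.
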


\proof
Whenever $T< T_{\span}^\cvx$ the result follows directly from (\ref{QNAsympEq})
and the asymptotic expansion of the Bergman kernel: applying
(\ref{QNAsympEq}) to $h_0$, using the explicit formula for $u_s$ and then
applying (\ref{QNAsympEq}) to $h_s$, we obtain
$$
E_N(s,z)
=
\frac{1}{N}
\log \sum_{\alpha \in NP\cap\ZZ^n}
\frac{|\chi_\a(z)|^2_{h_s^N}}{\calQ_{h_s^N}(\a)}+O(\log N/N),
$$
and this is $O(\log N/N)$ by the asymptotic expansion of the Bergman kernel \cite{Z3}.
Here by $O(\log N/N)$ we mean a quantity that is bounded from above
and below by $\pm C\frac{\log N}N$ where $C$ may depend on the Cauchy
data and on $T$.

Assume now that $T\ge T_\span^\cvx$. First, we have
\begin{equation}
\label{SummandFirstEq}
e^{-sN \dot u_0(\a/N) }
\frac{|\chi_\a(z)|^2_{h_s^N}}{\calQ_{h_0^N}(\a)}
=
e^{-sN \dot u_0(\a/N) }
\frac{e^{\langle x,\alpha\rangle-N\psi_s}}
{\calQ_{h_0^N}(\a)}.
\end{equation}
From the definition of the Legendre transform we
obtain that this is bounded from above by
$$
e^{-sN \dot u_0(\a/N)+\langle x,\alpha\rangle+Nu_s(\a/N)-N\langle x,\alpha/N\rangle}
/\calQ_{h_0^N}(\a)
=
e^{N u_0(\a/N)}/\calQ_{h_0^N}(\a).
$$
Applying (\ref{QNAsympEq}) to $h_0$ and using the fact 
that $\dim H^0(M,L^N)$ is polynomial in $N$
we obtain that
$$
E_N(s,z)\le O(\log N/N).
$$

We now turn to proving a lower bound for $E_N(s,z)$ when $T\ge T_\span^\cvx$.
Rewrite (\ref{SummandFirstEq}) as
$$
e^{\langle x,\alpha\rangle-N\psi_s-Nu_s(\a/N)}F(\a,N)N^{-C}.
$$
A lower bound for $E_N(s,z)$ will follow once we find one summand in (\ref{ENDefEq})
that is not decaying to zero too fast.
More precisely, we will seek $\tilde N=\tilde N(s,x)$ and
one $\alpha=\alpha(N,s,x)\in NP\cap\ZZ^n$ for each $N>\tilde N$, for which
$$
e^{\langle x,\alpha\rangle-N\psi_s-Nu_s(\a/N)}
\ge
e^{-CN^{1-\epsilon}},
$$
for some $\epsilon>0$.

Fix $x\in\RR^n$ (recall $|z|^2=e^x$). The \K potential $\psi_s$ is
defined on all of $\RR^n$  and satisfies
\begin{equation}
\label{LegendreTransformInequalityEq}
\psi_s(x)\ge \langle x,y\rangle-u_s(y), \quad \forall y\in P,
\end{equation}
with equality if and only if $y\in\partial \psi_s(x)$ (see
\cite{Ro}). 
Let $y_1\in P$ satisfy equality in (\ref{LegendreTransformInequalityEq}). 
It exists, since the supremum in
%by Proposition \ref{FirstMainProp}
$$
\psi_s(x)=\sup_{y\in P}[\langle x,y\rangle -u_s(y)],
$$
in necessarily achieved and finite ($P$ is compact and $u_s$ is bounded);
hence by convexity of $\psi_s$ we have $\partial\psi_s(x)\ne\emptyset$,
and one may choose then $y_1\in\partial\psi_s(x)$.
Then we need to find $\tilde N=\tilde N(s,x)$ and $\alpha=\alpha(N,s,x)$ such that
$$
e^{N(\mskip1mu\langle x,\alpha/N-y_1\rangle+u_s(y_1)-u_s(\a/N))}
\ge
e^{-CN^{1-\epsilon}}, \quad \h{\rm\ for each $N>\tilde N$}.
$$
In fact we will derive such an estimate
where the right hand side is $e^{-C\log N}$.
First, we need the following result concerning $\del\psi_s(x)$.

\begin{claim}
\label{ClaimInteriorPolytope}
Let $x\in\RR^n$ and let $y_1\in\del\psi_s(x)$. Then
 $y_1\in P\sm \del P$.
\end{claim}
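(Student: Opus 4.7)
The plan is to exhibit, for any putative boundary point $y_0 \in \partial P$, an inward direction $v$ along which the objective $y \mapsto \langle x, y\rangle - u_s(y)$ strictly increases at an infinite initial rate. This will rule out any boundary maximizer, and hence any element of $\partial \psi_s(x)$ on $\partial P$, via the equality characterization in (\ref{LegendreTransformInequalityEq}).

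First I would record that $u_s$ inherits Guillemin's boundary structure. The identification (\ref{LHTDefSecondEq}) yields $u_0 = u_G + F_0$ with $F_0 \in C^\infty(P)$. Since $\dot\vp_0 \in C^\infty(M)^{\bfT}$ descends through the moment map to a smooth function on the polytope, one also has $\dot u_0 \in C^\infty(P)$, so
\[
u_s = u_G + F_s, \qquad F_s := F_0 + s\,\dot u_0 \in C^\infty(P).
\]
In particular $u_s$ is continuous on the compact polytope $P$, so the supremum in $\psi_s(x) = \sup_{y \in P}[\langle x, y\rangle - u_s(y)]$ is attained.

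Next I would compute the one-sided directional derivative of $u_s$ at a boundary point. Fix $y_0 \in \partial P$, let $K := \{k : l_k(y_0) = 0\}$ be the set of active facets, and choose $v \in \RR^n$ with $\langle v_k, v\rangle > 0$ for each $k \in K$; such $v$ exists since $P$ is full-dimensional (take $v = y^* - y_0$ for any $y^* \in P \sm \partial P$). Then $y_0 + tv \in P \sm \partial P$ for all small $t > 0$, and from $l_k(y_0 + tv) = t\langle v_k, v\rangle$ for $k \in K$,
\[
\frac{d}{dt}\bigl[l_k(y_0 + tv)\log l_k(y_0 + tv)\bigr]\bigg|_{t \to 0^+} = \langle v_k, v\rangle\bigl(\log t + \log\langle v_k, v\rangle + 1\bigr) \longrightarrow -\infty.
\]
The remaining contributions to $u_s$ (finitely many smooth terms in $u_G$ from the inactive facets, plus the smooth $F_s$) stay bounded as $t \to 0^+$. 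Hence the one-sided directional derivative of $u_s$ at $y_0$ in direction $v$ equals $-\infty$.

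Finally, if some $y_1 \in \partial\psi_s(x) \cap \partial P$ existed, then by the equality characterization of $\partial\psi_s(x)$ it would maximize $y \mapsto \langle x, y\rangle - u_s(y)$ on $P$; but then the corresponding one-sided directional derivative at $y_1$ in the inward direction $v$ selected above would be $\langle x, v\rangle - (-\infty) = +\infty$, so $\langle x, y_1 + tv\rangle - u_s(y_1 + tv) > \langle x, y_1\rangle - u_s(y_1)$ for small $t > 0$, contradicting maximality. Thus $y_1 \in P \sm \partial P$. The only mildly subtle point is the smoothness of $\dot u_0$ up to $\partial P$, which rests on the identification of $\bfT$-invariant smooth functions on $M$ with smooth functions on $P$ coming from the Delzant construction; once this is in hand the directional derivative computation is elementary, and the argument is insensitive to whether $s$ is below or above $T_\span^\cvx$, since Guillemin's boundary term $u_G$ is present in $u_s$ regardless.
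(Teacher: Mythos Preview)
Your argument is correct and rests on the same computational core as the paper --- Guillemin's formula $u_s = u_G + F_s$ with $F_s \in C^\infty(P)$, from which the $\sum l_k\log l_k$ term forces singular boundary behaviour. The route is mildly different, however. The paper first invokes the duality $y_1\in\partial\psi_s(x)\Rightarrow x\in\partial u_s(y_1)$ (valid even when $u_s$ is not convex), and then shows $|\nabla u_s(y)|\to\infty$ as $y\to\partial P$ in facet coordinates to conclude $\partial u_s(y)=\emptyset$ on $\partial P$. You bypass the dual subdifferential entirely: since $y_1\in\partial\psi_s(x)$ is exactly a maximizer of $y\mapsto\langle x,y\rangle-u_s(y)$ on $P$, you compute a single one-sided directional derivative of this objective at a boundary point along an inward segment $t\mapsto y_0+t(y^\star-y_0)$ and find it equals $+\infty$, directly contradicting maximality. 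Your approach is slightly more self-contained (no appeal to the conjugacy result in [HL], and no need to argue that $|\nabla u_s|\to\infty$ near $\partial P$ forces the subdifferential at boundary points to be empty), while the paper's formulation packages the same blow-up in a way that is reused later in the $C^0$-estimate (Lemma \ref{CZeroLemma}) via the bound $|\nabla u_s(y_2)|\le C_T\log N$.
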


\begin{proof}

Note that by duality $x\in\del u_s(y_1)$ (this holds
even though $u_s$ need not be convex, 
see \cite{HL}, Theorem 1.4.1, p. 47), and in particular
$\del u_s(y_1)\not=\emptyset$.
Therefore, 
%Proposition \ref{SecondMainProp} implies that
%$y_1$ lies in $P\setminus\del P$.
it suffices to show that 
$\lim_{y\ra\del P}|\nabla u_s(y)|=\infty$,
since that will imply that 
$\del u_s(y)=\emptyset$ whenever $y\in \del P$.

Let $\{w_i\}\subset P\sm\del P$ be a sequence converging to
$y\in \del P$. Assume without loss of generality 
that $l_1,\ldots,l_n$ provide a coordinate chart 
in a neighborhood of $y$ in $P$.
Using Guillemin's formula (\ref{GuilleminFormulaEq}),
in these coordinates the gradient
of $u_s$ takes the form 
$(\log l_1+h_1,\ldots,\log l_n+h_n)$,
where $h_j\in C^\infty(P),\,j=1,\ldots,n$.
It then follows that
$\lim_{y\ra\del P}|\nabla u_s(y)|=\infty$,
as desired.
\end{proof}

The points $\{\a/N\}_{NP\cap\ZZ^n}$ are $C/N$-dense in $P$, where $C>0$ is
some uniform constant.
Hence, for each of the $2^n$ orthants in $\RR^n$ there exists a point
$\a/N$ that is $C/N$-close to $y_1$
and such that the vector $\a/N-y_1$ is contained in that orthant.
Now let $\tilde N$ be chosen large enough so
that $\hbox{dist}(y_1,\partial P)>C/\tilde N$
(possible by Claim \ref{ClaimInteriorPolytope}).
Further, let $\tilde N$ be chosen so large such that 
we may find $\alpha_1=\a_1(\tilde N)$ such that $\a_1/\tilde N\in P\setminus\partial P$ and
\begin{equation}
\label
{AlphaOneDistIneqDefEq}
\hbox{dist}(\a_1/\tilde N,\partial P)>C/\tilde N,
\end{equation}
and also
\begin{equation}
\label{AlphaOneIneqDefEq}
\langle\a_1/\tilde N-y_1,x\rangle
\ge 
0, 
\quad 
\hbox{and }\quad \frac C{2\tilde N} 
\le 
|\a_1/\tilde N-y_1|\le \frac C{\tilde N}.
\end{equation}
Note that $y_1$ depends only on $s$ and $x$ and so does $\tilde N$.
Further, for every $N>\tilde N$ one may find an $\alpha_1=\alpha_1(N)$
satisfying the inequalities (\ref{AlphaOneDistIneqDefEq})
and (\ref{AlphaOneIneqDefEq}) with $\tilde N$ replaced by $N$.

Applying the mean value theorem to  the line segment between
$\a_1/N$ and $y_1$,  it follows that
\begin{equation}
\label{SummandSecondEq}
e^{N(\langle x,\alpha_1/N-y_1\rangle+u_s(y_1)-u_s(\a_1/N))}
\geq
e^{-N|y_1-\a_1/N||\nabla u_s(y_2)|},
\end{equation}
where $y_2\in P\sm\del P$ is some point on the line segment between $\a_1/N$ and
$y_1$. 
Hence,
$$
\hbox{dist}(y_2,\partial P)>C/N.
$$
By Guillemin's
formula  (\ref{GuilleminFormulaEq}), we therefore have
$$
|\nabla u_s(y_2)|<C\log N+s||\dot u_0||_{C^1(P)}<C_T\log N, 
$$
for some constant $C_T$ that depends on $T$. It follows  that
\begin{equation}
\begin{array}{lll}
E_N(s,z) & \ge\dis \frac1N\log e^{N(\langle
x,\alpha_1(N)/N-y_1\rangle+u_s(y_1)-u_s(\a_1(N)/N))} \cr\cr\dis
&\ge\dis \frac1N\log e^{-C_T\log N}\ge \frac{-C_T\log N}N,
\end{array}
\end{equation}
and this concludes the proof of Lemma \ref{CZeroLemma}.
\qed

\bigskip

Lemma \ref{CZeroLemma} completes the proof of Theorem \ref{FirstMainThm}.

\bigskip

\bigskip\bigskip
\noindent {\bf Acknowledgments.}
This material is based upon work supported in part under National Science Foundation
grants DMS-0603850, 0904252.
Y.A.R. was also supported by graduate fellowships at M.I.T. and at Princeton University
during the academic year 2007-2008,
and by a National Science Foundation
Postdoctoral Research Fellowship at
Johns Hopkins University during the academic year 2008-2009.
Some of the results of this article were first presented in October 2008
at the conference ``Perspectives in Geometric Analysis" held at the BICMR.

%%%%

\bigskip

\end{document}